\newcommand{\Msum}{{\, \Circled{\Sigma}\, }}
\newcommand{\C}{{\mathbb C}}
\newcommand{\N}{{\mathbb N}}
\newcommand{\R}{{\mathbb R}}
\newcommand{\Z}{{\mathbb Z}}
\theoremstyle{plain}
\newtheorem{Theorem}{Theorem}[section]
\newtheorem{Corollary}[Theorem]{Corollary}
\newtheorem{Lemma}[Theorem]{Lemma}
\newtheorem{Proposition}[Theorem]{Proposition}
\theoremstyle{definition}
\newtheorem{Definition}[Theorem]{Definition}
\newtheorem{Remark}[Theorem]{Remark}
\newtheorem{Example}[Theorem]{Example}
\newcommand{\cA}{{\mathscr A}}
\newcommand{\tV}{{\tilde V}}
\newcommand{\cSigma}{{\mathscr S}}
\newcommand{\supp}{{\emph Esupp\,}}
\newcommand{\suppf}{{\emph supp\,}}
\newcommand{\costi}[1]{{\color{black}#1}}
\title{Multivariate compactly supported $C^\infty$  functions by subdivision}
\author[1]{Maria Charina}
\author[2]{Costanza Conti}
\author[3]{Nira Dyn}
\address[1]{Fakult\"at f\"ur Mathematik, Universit\"at Wien, Austria}
\address[2]{Dipartimento di Ingegneria Industriale, University of Florence, Italy}
\address[3]{School of Mathematical Sciences, Tel Aviv University, Israel}
\begin{document}

\begin{abstract}
This paper discusses the generation of multivariate $C^\infty$ functions with  
compact small supports by subdivision schemes. Following the construction of such a univariate function, called \emph{Up-function}, by a non-stationary scheme based on masks of {spline subdivision schemes}
of growing degrees, we term the multivariate functions we generate Up-like functions. We generate them by non-stationary schemes based on
masks of  box-splines of growing supports.
To analyze the convergence and smoothness of these non-stationary schemes, we develop new tools 
 which apply to a wider class of schemes than the class we study. With our method for achieving small compact supports, we obtain, in the univariate case, Up-like functions with supports $[0, 1 +\epsilon ]$ in comparison to the support $[0, 2] $ of the
Up-function. Examples  of univariate and bivariate Up-like functions are given. As in the univariate case, the construction of Up-like functions can motivate the generation  of $C^\infty$ compactly supported wavelets of small support in any dimension.

\end{abstract}

\begin{keyword}
Non-stationary subdivision schemes, Rvachev Up-function, Box-splines, masks of increasing supports, {multivariate smoothing factors}.
\end{keyword}

\maketitle

\emph{\bf We dedicate this paper to the memory of  Maria Charina, who was the initiator of this work and took a very active part in it. Maria was a great collaborator, hard-working, sensitive, always smiling and with positive attitude to her work and her co-workers.
More importantly, she was a very good friend.}

\section{Introduction}
In this paper we design multivariate non-stationary subdivision schemes which generate $C^\infty$ compactly supported basic limit functions with small supports. We term these functions  \emph{Up-like functions}, following the construction of the univariate Up-function studied in \cite{DynLevinDerfel}. While stationary subdivision schemes use the same refinement rules in each refinement level, non-stationary subdivision schemes apply level dependent refinement rules. The variety of refinement rules is
what makes this schemes capable of generating limits which cannot be generated by stationary subdivision schemes, such as
 exponential splines or conic sections. In these non-stationary examples, the generating schemes have level dependent 
refinement rules but  the supports of all the different rules is the same, a fact which is taken into account when analyzing their convergence and smoothness (see e.g. \cite{DynLevin_asymp} or the survey paper \cite{ContiDyn2019}). Differently, the non-stationary subdivision schemes we study in this paper, are based on refinement rules of growing support size. While the smoothness of the functions generated by stationary schemes is bounded by the support size of their basic limit functions, here we show how non-stationary schemes can be used to generate multivariate $C^ \infty$ basic limit functions of compact and small supports. 

In \cite{DynLevinDerfel} (see also \cite[Example 4.18]{DynLevin_acta}), it is shown that the basic limit function of a univariate non-stationary scheme which applies at refinement level $r$ the refinement rules of the stationary scheme generating polynomial  splines of degree $r$ for $r\in \N_0$, is a $C^\infty$ function supported on $[0,2]$. Using a similar approach, with multivariate non-stationary schemes based on refinement rules of  three-directional box-splines, we show how to generate multivariate compactly supported $C^\infty$ basic limit functions.  In fact, our method applies to other types of box-splines schemes as well as to other types of schemes with positive masks. 
We also present a method for obtaining $C^\infty$ functions of small supports. With this method, in the univariate case, we design a $C^\infty$ functions supported in $ [0, 1+\epsilon]$  with positive $\epsilon$ arbitrarily small.                      
We term all the proposed variants of the Up-function, \emph{Up-like functions} and prove the convergence and smoothness of their generating non-stationary schemes. We also analyze the supports of the Up-like functions, and give univariate and bivariate examples. 

For deriving the above results, we develop new analysis tools for non-stationary schemes. In particular, we prove a general convergence theorem for non-stationary schemes whose refinement rules are in a certain class. For the smoothness analysis of our non-stationary schemes we use multivariate smoothing factors. The convergence and smoothness theorems we provide apply to a wider class of non-stationary schemes than the class of schemes generating our Up-like functions. 

Note that the idea of generating $C^\infty$  univariate functions of compact support is used  to generate univariate  $C^\infty$ wavelets of compact supports (see e.g. \cite{CohenDyn, HanShen}).
The construction of these wavelets is by a non-stationary subdivision scheme which applies in refinement level $k$ refinement rules 
of the $k$-th order filter of the Daubechies wavelets \cite{Dau}.
The Up-like functions we construct in this paper can also motivate the generation of  multivariate $C^\infty$ compactly supported wavelets.
We also note that, perhaps, the construction and analysis of $C^\infty$ functions with
compact small supports proposed here, could be performed using Fourier techniques as is done in the univariate case in \cite[Secion 2]{HanShen}.  Even if it might be less general, we believe that our time domain approach  is elegant and easy to follow.

\smallskip The rest of the paper is organized as follows. After two subsections dealing with background, notation and summary of our proposed new approach, Section \ref{sec:convergence_up} presents convergence results of non-stationary subdivision schemes with masks in a special class. Then, Section \ref{sec:regularity_up} is devoted to the smoothness analysis of subdivision schemes with a certain number of multivariate smoothing factors in their symbols. In Section \ref{sec:compact support}, after stating an assumption on the supports of the masks of the non-stationary schemes  which ensures the compact
support of their basic limit functions, we compute the precise size of these supports. The closing Section \ref{sec:examples} discusses several examples  of Up-like functions generated by special 
non-stationary schemes based on B-splines' masks and on three directional box-splines' masks. 

\subsection{Background and notation} \label{BACK}
A \emph{subdivision operator} $S_a: \ell_\infty(\Z^d) \rightarrow \ell_\infty(\Z^d)$ is defined by
\begin{equation}\label{def:sub}
 S_{a} f=\sum_{\beta\in \Z^d} \, a(\cdot-2\beta) \, f(\beta), \qquad f \in \ell_\infty(\Z^d),
\end{equation}
with the real-valued sequence $a=\left(a(\alpha)\, : \, \alpha\in \Z^d\right)$ called the subdivision \emph{mask}. 
The  \emph{support} of the mask is the set $\{ \alpha\in \Z^d\,|\ a_\alpha\neq 0\}$, here assumed to be finite. In our analysis 
we also need the notion of \emph{extended support} of a mask $a$,
 $\supp(a)$, defined as 
\begin{Definition} \label{def:support}
$$
\supp(a)=\overline{Convexhull\left(\{ \alpha\in \Z^d\,|\ a_\alpha\neq 0\}\right)}.
$$
\end{Definition} 
\noindent Note that, by definition, $\supp(a)$ is a compact and convex set.

A mask $a$ consists of $2^d$ sub-masks $\{a(\epsilon+2\alpha),\ \alpha\in \Z^d\},\ \epsilon \in \{0,1\}^d$. Each sub-mask corresponding to one 
$\epsilon \in \{0,1\}^d$
generates in \eqref{def:sub} 
 the refinement rule
$$
(S_{a} f)(\epsilon+2\alpha)=\sum_{\beta\in \Z^d} \, a(\epsilon+2\beta) \, f(\alpha-\beta), \quad   \alpha\in \Z^d, \qquad f \in \ell_\infty(\Z^d).
$$

\smallskip A \emph{stationary subdivision scheme} is based on the repeated application of the subdivision operator $S_a$,  
and generates the sequence of refined values 
\begin{equation} \label{def:stationary_subdivision}
 f^{[r]}=S^r_{a}  f^{[0]},  \quad f^{[0]} \in \ell_\infty(\Z^d),  \quad r \in \N.
\end{equation} 
In contrast, a \emph{non-stationary subdivision scheme} is based on the repeated application of \emph{different} subdivision operators at different refinement levels, $(S_{a_k} \, :\, k \in \N_0)$, defined by
\begin{equation} \label{def:non_stationary_subdivision}
 f^{[r]}=S_{a_{r-1}} \ldots S_{a_0} f^{[0]},  \quad f^{[0]} \in \ell_\infty(\Z^d),  \quad r \in \N.
\end{equation}

The definitions of \emph{convergence} of stationary and non-stationary schemes 
vary insignificantly only by the choice of the 
subdivision operators in the subdivision recursion. Thus, we define the concept of
convergence only in the more general, non-stationary case. It uses the notion of {\it infinity norm} of a sequence $f\in  \ell_\infty(\Z^d)$, defined as 
$ \| \,  f \, \|_\infty=sup_{\alpha\in\Z^d}|f(\alpha)|$. In this paper we use only the $\|\cdot\|_\infty$, therefore we omit the subscript $\infty$.

\begin{Definition} \label{def:convergence}
Let $s\in \N_0$. A subdivision scheme $(S_{a_k}\, : \,   k\in \N_0)$ \emph{converges to $C^s(\R^d)$ limit functions}, if
for any initial sequence $f^{[0]} \in \ell_\infty(\Z^d)$, there exists a
\emph{function} $g \in C^s(\R^d)$, nonzero for at least one nonzero sequence $f^{[0]}$, such that
\begin{equation}\label{eq:subdivisionlimit}
  \lim_{k \to\infty}  \| \, g(2^{-(k+1)}\cdot)- S_{a_k} S_{a_{k-1}} \ldots S_{a_0} f^{[0]} \, \|= 0.
\end{equation}
\end{Definition}
We write \eqref{eq:subdivisionlimit} as
$$
 g=\lim_{r \rightarrow \infty} S_{a_r} S_{a_{r-1}} \ldots S_{a_0} f^{[0]}, \qquad f^{[0]} \in \ell_\infty(\Z^d).
$$

The convergence of a stationary scheme based on $S_a$ implies that (see e.g. \cite{CDM91})
\begin{equation}\label{5}
\sum_{\alpha\in \Z^d}a(\alpha+\epsilon)=1,\quad \hbox{for}\quad \epsilon \in \{0,1\}^d.
\end{equation}
We continue with some \emph{factorization} properties for which we make use of the masks' \emph{symbol}
$$
 a(z)=\sum_{\alpha \in \Z^d} \, a(\alpha) \, z^\alpha, \qquad z^\alpha=z_1^{\alpha_1} \cdot \ldots \cdot z_d^{\alpha_d}, \qquad
 z \in \left( \C \setminus \{0\}\right)^d,
$$
and of the notion of \emph{norm of $S_a$} defined by (see e.g. \cite{CDM91})
\begin{equation} \label{eq:norma}
 \|S_{a}\| = \max_{\epsilon \in \{0,1\}^d} \sum_{\alpha \in \Z^d} \,| \, a(\alpha+\epsilon)\, | \,.
\end{equation}
For a vector $v \in \N_0^d$, $v \not=0$, we define the \emph{directional difference operator}
$$
 \nabla_v: \ell_\infty(\Z^d) \rightarrow \ell_\infty(\Z^d), \qquad \nabla_v \, f=f(\cdot +v)-f,\quad f\in  \ell_\infty(\Z^d),
$$ 
and, for a basis $V=\{v^{[1]}, \ldots v^{[d]}\}$ of $\R^d$ with $v^{[1]}, \ldots v^{[d]} \in \N_0^d$,  we define the difference operator as the vector
$$
  \nabla_V: \ell_\infty(\Z^d) \rightarrow \ell_\infty^d(\Z^d), \qquad  \nabla_V \, f =\left(\begin{array}{cc} \nabla_{v^{[1]}} \, f \\
 \vdots \\
 \nabla_{v^{[d]}}  \, f 
 \end{array}\right)\in \R^d, \quad f\in  \ell_\infty(\Z^d).
$$ 
 In the stationary case, corresponding to the scheme $(S_{a}\, : \,   k\in \N_0)$ which is  simply denoted by $S_{a}$, it is well known \cite{CDM91, Dyn} that the convergence of $S_a$ implies the existence of the decomposition
\begin{equation} \label{eq:SaSb}
 \nabla_V \, S_a = S_b \,  \nabla_V,
\end{equation}
with $V$ the canonical basis in $\R^d$ and with a matrix-valued difference subdivision scheme $$S_b:\ell_\infty^d(\Z^d) \rightarrow \ell_\infty^d(\Z^d),$$
which is \emph{contractive}, namely there exists $\ell\in \N$ such that $\|S_b^\ell\|<1$. 

\begin{Remark} \label{def:BLF} 
For the sake of simplicity, when $V$ is the canonical basis in $\R^d$, the difference operator $ \nabla_V$ is denoted by $\nabla$ in the rest of the paper
\end{Remark}

\begin{Definition} \label{def:BLF} 
The \emph{basic limit function} of a converging scheme $S_a$ is the compactly supported function
\begin{equation} \label{def:phia}
 \phi_a=\lim_{r \rightarrow \infty} S^r_a \delta, \quad \hbox{with}\quad \delta_\alpha\neq 0\ \hbox{only for}\ \alpha=0, \ \hbox{and}\  \delta_0=1.
\end{equation}
\end{Definition}

\begin{Definition} \label{def:LcontractivityNS} 
A convergent subdivision operator $S_a$ is called \emph{$(L,\rho)$-level contractive}, if
$$L=argmin\, \left\{n\in\mathbb N\ :\ \|S_{b}^n\|<1\right\}\quad \hbox{and} \quad
\rho=\|S_{b}^{L}\| <1.$$ 
\end{Definition}

We conclude the background subsection with the notion of \emph{Minkowski sum} of two subsets $H,\, G$ of $\R^d$ and the \emph{product  by a scalar $\lambda \in \R$} of such a set defined,
respectively, as 
\begin{equation*}
 H\Msum G=\{h + g\,:\, h \in H, \, g \in G\} \quad  \hbox{and} \quad \lambda\,  H=\{\lambda \cdot h\,:\, h \in H\}\,. 
\end{equation*}

\subsection{The proposed new approach} \label{sec:newapproach}
In this paper, we analyze the convergence to the basic limit function
\begin{equation} \label{def:phim}
 \phi_0=\lim_{k \rightarrow \infty} S_{a_k} S_{a_{k-1}} \ldots S_{a_0} \delta,  
\end{equation}
generated by the non-stationary subdivision scheme $(S_{a_k} \, :\, k \in \N_0)$ with all masks $a_k$ in a special class, using a novel approach. The novel approach links the convergence 
to $\phi_0$ with 
the convergence  of the sequence of limits 
\begin{equation} \label{def_phi_k}
 \varphi_k=\lim_{r \rightarrow \infty} S_{a_k}^r S_{a_{k-1}} \ldots S_{a_0} \delta,\quad k\in \N_0,
\end{equation}
of the corresponding stationary schemes. 

The smoothness analysis of $ \phi_0$ is based on  special classes of masks whose symbols contain a certain number of multivariate smoothing factors. 

Concerning the support of $\phi_0$, the novelty is that instead of considering operators based on a sequence of masks with the same support as done in the majority of works on non-stationary schemes, we involve masks of slowly growing supports. This allow us to  achieve $C^\infty$ basic limit functions with compact, small, supports.

\section{Convergence analysis} \label{sec:convergence_up}

\noindent In this section, we show convergence of non-stationary subdivision schemes with masks in a special class, denoted by ${\mathscr A}(L,\rho)$. 
\begin{Definition} \label{def_A0N}
Let $L\in \N, \ L\ge 1$ and $\rho\in \R,\ \rho<1$. We call ${\mathscr A}(L,\rho)$   the class of masks $a$ with the following three properties:
\begin{description}
\item[$1)$] The mask entries are non-negative;
\item[$2)$] The stationary subdivision scheme $(S_{a}\, : \,   k\in \N_0)$  is convergent;
\item[$3)$] For each mask $a\in {\mathscr A}(L,\rho)$, the subdivision operator $S_a$ is $(L(a),\rho(a))$-level contractive with $L(a)\le L$ and
$\rho(a)\le \rho$.
      \end{description}
      \end{Definition}
For this class of masks we prove two preliminary results used in the proof of the main convergence result of this section,  
Theorem~\ref{th:convergence}.

\begin{Lemma} \label{lemma:new} Assume $a_k \in {\mathscr A}(1,\rho),\, k\in \N_0$, with $\rho<1$.  Then
\begin{eqnarray*}
 \lim_{k,r \rightarrow \infty} \|  S_{a_k}^{r+1} S_{a_{k-1}} \ldots S_{a_0} \delta - S_{a_{k+r}} S_{a_{k+r-1}} \ldots S_{a_0} \delta\|  =0\,.
\end{eqnarray*}
\end{Lemma}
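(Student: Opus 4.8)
The plan is to reduce everything to a telescoping estimate controlled by two facts about masks in ${\mathscr A}(1,\rho)$: each operator $S_{a_k}$ has $\|S_{a_k}\|=1$, and its difference scheme is a genuine contraction. Indeed, non-negativity of the entries together with \eqref{5} gives $\sum_\alpha |a_k(\alpha+\epsilon)|=\sum_\alpha a_k(\alpha+\epsilon)=1$, so by \eqref{eq:norma} $\|S_{a_k}\|=1$ for every $k$, whence every finite product of the $S_{a_k}$ has norm at most $1$. Since $L=1$ in Definition~\ref{def:LcontractivityNS}, the factorization \eqref{eq:SaSb}, $\nabla S_{a_k}=S_{b_k}\nabla$, holds with $\|S_{b_k}\|=\rho(a_k)\le\rho<1$. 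Iterating yields the two decay estimates I will use repeatedly: $\|\nabla S_{a_k}^{\,j} h\|\le\rho^{\,j}\|\nabla h\|$ for any $h$, and, writing $g=S_{a_{k-1}}\cdots S_{a_0}\delta$, the relation $\nabla g=S_{b_{k-1}}\cdots S_{b_0}\nabla\delta$ so that $\|\nabla g\|\le\rho^{\,k}\|\nabla\delta\|$.

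Next I would telescope. With $g$ as above, the two sequences in the statement are $S_{a_k}^{r+1}g$ and $S_{a_{k+r}}\cdots S_{a_{k+1}}S_{a_k}g$, and I would use the identity
\begin{equation*}
 S_{a_{k+r}}\cdots S_{a_k}-S_{a_k}^{r+1}=\sum_{j=1}^{r}\bigl(S_{a_{k+r}}\cdots S_{a_{k+j+1}}\bigr)\bigl(S_{a_{k+j}}-S_{a_k}\bigr)S_{a_k}^{\,j},
\end{equation*}
where the leading product is empty (the identity) when $j=r$, and the $j=0$ term is dropped because $S_{a_k}-S_{a_k}=0$. Applying this to $g$ and using $\|S_{a_{k+r}}\cdots S_{a_{k+j+1}}\|\le1$ reduces the claim to bounding $\sum_{j=1}^{r}\|(S_{a_{k+j}}-S_{a_k})\,S_{a_k}^{\,j}g\|$.

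The heart of the argument is the estimate $\|(S_{a_{k+j}}-S_{a_k})h\|\le C_{k,j}\,\|\nabla h\|$. Both masks are non-negative and convergent, so each of their sub-masks sums to $1$ and the difference mask $c=a_{k+j}-a_k$ has every sub-mask summing to $0$; hence $S_{a_{k+j}}-S_{a_k}$ annihilates constants and, writing $(S_c h)(\epsilon+2\alpha)=\sum_\beta c(\epsilon+2\beta)\,[h(\alpha-\beta)-h(\alpha)]$, each lattice increment $h(\alpha-\beta)-h(\alpha)$ telescopes into at most $|\beta|_1$ unit differences, each bounded by $\|\nabla h\|$. This gives $C_{k,j}$ proportional to the $\ell_1$-radius of the supports of $a_k$ and $a_{k+j}$. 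Combining with the decay estimates gives
\begin{equation*}
 \bigl\|(S_{a_{k+j}}-S_{a_k})S_{a_k}^{\,j}g\bigr\|\le C_{k,j}\,\rho^{\,j}\,\|\nabla g\|\le C_{k,j}\,\rho^{\,j+k}\,\|\nabla\delta\|,
\end{equation*}
so the whole difference is at most $\|\nabla\delta\|\,\rho^{\,k}\sum_{j\ge1}C_{k,j}\rho^{\,j}$. The main obstacle is precisely the behaviour of $C_{k,j}$: because this paper uses masks of \emph{growing} supports, $C_{k,j}$ grows with $k$ and $j$, so I must exploit that the supports grow only slowly (polynomially in the level, as for box-splines). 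Under such sub-geometric growth, $\sum_{j\ge1}C_{k,j}\rho^{\,j}$ grows at most polynomially in $k$, the factor $\rho^{\,k}$ dominates, and the bound tends to $0$ as $k,r\to\infty$, proving the lemma.
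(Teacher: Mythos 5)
Your telescoping identity, the reduction to bounding $\sum_{j=1}^{r}\|(S_{a_{k+j}}-S_{a_k})S_{a_k}^{\,j}g\|$ with $g=S_{a_{k-1}}\cdots S_{a_0}\delta$, and the use of $\|S_{a_k}\|=1$ together with the contraction $\|\nabla g\|\le\rho^{k}\|\nabla\delta\|$ are exactly the skeleton of the paper's proof. The divergence --- and the gap --- is in the middle estimate. You bound $\|(S_{a_{k+j}}-S_{a_k})h\|\le C_{k,j}\|\nabla h\|$ by telescoping lattice increments, which forces $C_{k,j}$ to scale with the $\ell_1$-radius of the masks' supports, and you then close the argument only under an additional hypothesis of sub-geometric (``polynomial in the level'') growth of these supports. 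That hypothesis appears nowhere in the lemma: membership in ${\mathscr A}(1,\rho)$ (Definition~\ref{def_A0N}) constrains non-negativity, convergence and the contraction factor, but says nothing about the size or location of the supports. So as written you have proved a conditional variant of the statement, not the statement itself; the sentence beginning ``Under such sub-geometric growth'' is carrying weight that the stated hypotheses do not supply.

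The paper closes this step differently: it invokes \cite{CharinaContiSauer} to factor $S_{a_{k+j}}-S_{a_k}=S_{D_{k,j}}\nabla$, pushes $\nabla$ through $S_{a_k}^{\,j}$ and through $S_{a_{k-1}}\cdots S_{a_0}$ via \eqref{eq:SaSb}, and asserts the uniform bound $\|S_{D_{k,j}}|_\nabla\|=\|S_{D_{k,j}}\nabla\|\le 2$, which yields the support-independent estimate $2^{d+1}\rho^{k}\sum_{j=1}^{r}\rho^{j}$ and hence the limit. To repair your write-up you should either establish such a uniform bound or add the support-growth condition to the hypotheses you claim to work under. I will note that your concern is not idle: the constant you call $C_{k,j}$ is precisely $\sup\{\|S_{D_{k,j}}\nabla h\|:\|\nabla h\|\le 1\}$, which is not literally the operator norm $\sup\{\|S_{D_{k,j}}\nabla h\|:\|h\|\le 1\}\le 2$ that the paper uses, and your observation that the former scales with the support is exactly why the paper's identification of the two norms deserves scrutiny (and why, in the paper's applications, the masks are anchored at the origin with slowly growing supports). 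But judged as a proof of the lemma under its stated hypotheses, your argument does not reach the conclusion.
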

\begin{proof}
For $r\in \N$ we use the relation 
$$
 S_{a_{k}}^{r+1}   S_{a_{k-1}} \ldots S_{a_0} - S_{a_{k+r}} S_{a_{k+r-1}} \ldots S_{a_0} =   \sum_{j=1}^r S_{a_{k+r}} \ldots S_{a_{k+j+1}} ( S_{a_{k}}-S_{a_{k+j}})S^j_{a_{k}} S_{a_{k-1}}\ldots S_{a_{0}}
$$

with the convention that the product of the operators $S_{a_{k+r}} \ldots S_{a_{k+j+1}}$ is the identity for $j+1> r$. This equality is easy to see, since the sum on the right hand-side is a telescoping sum. Therefore,
\begin{equation}\label{8p}
\|S_{a_{k}}^{r+1}    S_{a_{k-1}} \ldots S_{a_0} \delta- S_{a_{k+r}} S_{a_{k+r-1}} \ldots S_{a_0}\delta\|\le    
\sum_{j=1}^r \| S_{a_{k+r}} \ldots S_{a_{k+j+1}}( S_{a_{k}}-S_{a_{k+j}})S^j_{a_{k}} S_{a_{k-1}}\ldots S_{a_{0}}\delta\|.
\end{equation}
The convergence of $ S_{a_{k}}$ and $ S_{a_{k+j}}$ implies, by \cite[Lemma 3]{CharinaContiSauer}, the existence of the  
operator 
$$
  S_{D_{k, j}}: \ell_\infty^{d}(\Z^d) \rightarrow \ell_\infty(\Z^d), \qquad S_{a_{k+j}}-S_{a_{k}} = S_{D_{k,j}} \nabla.
$$
Obviously, $\|S_{D_{k,j}}\nabla \|\le 2$ since for any $k\in \N_0$, $\|S_{a_k}\|=1$,  due to \eqref{5} and the non-negativity of the masks' entries.
Using \eqref{eq:SaSb} repeatedly we get from \eqref{8p},
$$
\begin{array}{lll}
\|S_{a_{k}}^{r+1}  S_{a_{k-1}} \ldots S_{a_0} \delta- S_{a_{k+r}} S_{a_{k+r-1}} \ldots S_{a_0}\delta \|&\le &  
\displaystyle{\sum_{j=1}^r \| S_{a_{k+r}} \ldots S_{a_{k+j+1}}\|} \cdot \|( S_{D_{k,j}}\, \nabla)S^j_{a_{k}} S_{a_{k-1}}\ldots S_{a_{0}}\delta\|\\
&\le& \displaystyle{\sum_{j=1}^r \| S_{a_{k+r}} \ldots S_{a_{k+j+1}}\|} \cdot \|S_{D_{k,j}}\,|_\nabla\|\cdot \|S^j_{b_{k}}\,|_\nabla\|\cdot \| S_{b_{k-1}}\ldots S_{b_{0}}\nabla \delta\|\\
&\le& 2^{d+1}\| S_{b_{k-1}}\ldots S_{b_{0}}\nabla \delta\| \cdot \displaystyle{ \sum_{j=1}^r \| S_{a_{k+r}} \ldots S_{a_{k+j+1}}\|\cdot \|S^j_{b_{k}}\,|_\nabla\| },
\end{array}
$$
where we used the fact that $\|\nabla \delta\|\le 2^d$ and that $ \|S_{D_{k,j}}\,|_\nabla\| = \|S_{D_{k,j}}\nabla \|\le 2$.  Next, due to $a_k \in {\mathscr A}(1,\rho), \, k\in \N_0$, and we conclude in view of \eqref{5}, \eqref{eq:norma}, \eqref{eq:SaSb} that $\|S_{a_k}\|=1$, $\|S_{b_k}\|\le \rho$ and therefore
$$
 \|S_{b_k} |_{\nabla}\| \le  \|S_{b_k} \| \le \rho < 1, \quad k \in \N_0,\quad \hbox{and}\quad \| S_{a_{k+r}} \ldots S_{a_{k+i+1}}\| \le 1 .
$$
Thus,
\begin{eqnarray*}
\|S_{a_{k}}^{r+1}  S_{a_{k-1}} \ldots S_{a_0} \delta- S_{a_{k+r}} S_{a_{k+r-1}} \ldots S_{a_0}\delta \| &\le&  
 2^{d+1} \, \rho^{k} \,  \sum_{j=1}^{r}  \,\rho^j\, 
\end{eqnarray*}
Finally, letting $r$ and $k$ go to infinity, we obtain
$$
  \lim_{k,r \rightarrow \infty} \|  S_{a_k}^r S_{a_{k-1}} \ldots S_{a_0} \delta - S_{a_{k+r}} S_{a_{k+r-1}} \ldots S_{a_0} \delta\|  \le \frac{2^{d+1}}{1-\rho} \cdot \lim_{k \rightarrow \infty} \, \rho^k  = 0\,,
$$
 which completes the proof of the Lemma.\end{proof}

The next Proposition~\ref{prop:main_up} studies the properties of the
function-sequence $\{ \varphi_k \}_{k\in \N_0}$, where the function $\varphi_k$ is the limit of the stationary scheme $S_{a_k}$ applied
to the initial sequence 
$S_{a_{k-1}} \ldots S_{a_0} \delta\in \ell_\infty(\Z^d)$ as in \eqref{def_phi_k}. The functions $\varphi_k,\ k\in\N_0$ are continuous as limits of stationary schemes.

\begin{Proposition} \label{prop:main_up} 
Assume 
$a_k \in {\mathscr A}(1,\rho)$,  $k\in \N_0$, with $\rho<1$. Then, the sequence of continuous functions
\begin{equation}\label{th:convergence_aux1}
\{ \varphi_k \}_{k\in \N_0},\quad \hbox{with}\quad  \varphi_k=\lim_{r \rightarrow \infty} S_{a_k}^r S_{a_{k-1}} \ldots S_{a_0} \delta, \quad k \in \N_0,
\end{equation}
is a Cauchy sequence, with a nonzero, continuous limit. 
\end{Proposition}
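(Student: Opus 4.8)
The plan is to show that consecutive members of the sequence are exponentially close, $\|\varphi_{k+1}-\varphi_k\|\le C\,\rho^{k+1}$ for a constant $C=C(d,\rho)$, so that $\{\varphi_k\}$ is a Cauchy sequence in the sup-norm whose uniform limit is automatically continuous. Throughout I would write $g_k=S_{a_{k-1}}\cdots S_{a_0}\delta$ (with $g_0=\delta$), so that $g_{k+1}=S_{a_k}g_k$ and $\varphi_k=\lim_{r\to\infty}S_{a_k}^r g_k$. A preliminary but essential bookkeeping point is to fix the normalization consistent with Definition~\ref{def:convergence}: the samples of $\varphi_k$ on the grid $2^{-(k+r)}\Z^d$ approximate $S_{a_k}^r g_k$, which is exactly what makes $\varphi_k$ and $\varphi_{k+1}$ comparable on a common physical grid $2^{-(k+1+r)}\Z^d$.

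The heart of the argument is a telescoping estimate at the sequence level, parallel to Lemma~\ref{lemma:new}. Since $S_{a_k}^{r+1}g_k=S_{a_k}^r g_{k+1}$, it suffices to control $\|(S_{a_{k+1}}^r-S_{a_k}^r)g_{k+1}\|$. I would write $S_{a_{k+1}}^r-S_{a_k}^r=\sum_{j=0}^{r-1}S_{a_{k+1}}^{r-1-j}(S_{a_{k+1}}-S_{a_k})S_{a_k}^j$ and use the factorizations $S_{a_{k+1}}-S_{a_k}=S_D\nabla$ (with $\|S_D\nabla\|\le 2$, by \cite[Lemma 3]{CharinaContiSauer} together with $\|S_{a_k}\|=1$) and $\nabla S_{a_k}^j=S_{b_k}^j\nabla$, so each summand becomes $S_{a_{k+1}}^{r-1-j}S_D S_{b_k}^j\nabla g_{k+1}$. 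The crucial point, exactly as in the Lemma, is that $\nabla g_{k+1}=S_{b_k}\cdots S_{b_0}\nabla\delta$ is already of size $\le 2^d\rho^{k+1}$; applying $S_{b_k}^j$ keeps it $\le \rho^j\cdot 2^d\rho^{k+1}$ (here $L=1$ gives $\|S_{b_k}\|\le\rho$), while $S_D$ and the powers $S_{a_{k+1}}^{r-1-j}$ are non-expansive. Summing the geometric series yields $\|(S_{a_{k+1}}^r-S_{a_k}^r)g_{k+1}\|\le \frac{2^{d+1}}{1-\rho}\,\rho^{k+1}$, uniformly in $r$.

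The remaining step passes from sequences to functions. Because each $\varphi_k$ is the continuous, compactly supported limit of the stationary scheme $S_{a_k}$, one has $\sup_\alpha|\varphi_k(2^{-(k+r)}\alpha)-(S_{a_k}^r g_k)(\alpha)|\to 0$, and likewise for $k+1$, at the matched scale $2^{-(k+1+r)}$. Since the dyadic grids are nested and dense and $\varphi_{k+1}-\varphi_k$ is continuous, the supremum of $|\varphi_{k+1}-\varphi_k|$ over the grid tends to $\|\varphi_{k+1}-\varphi_k\|$; combining with the previous estimate gives $\|\varphi_{k+1}-\varphi_k\|\le \frac{2^{d+1}}{1-\rho}\rho^{k+1}$. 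Summing this geometric bound over $k$ shows that $\{\varphi_k\}$ is Cauchy, hence uniformly convergent to a continuous function $\varphi_\infty$.

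Finally, to establish $\varphi_\infty\neq 0$ I would use that the non-negativity of the masks together with the sum rule \eqref{5} forces each $\varphi_k$ to be non-negative with $\int_{\R^d}\varphi_k=1$ (from $\sum_\alpha(S_af)(\alpha)=2^d\sum f$ combined with the scale $2^{-(k+r)}$), and then pass the integral to the limit to obtain $\int\varphi_\infty=1$, so $\varphi_\infty\not\equiv 0$. I expect the main obstacle to be precisely the justification of this last passage: it requires the supports of the $\varphi_k$ to remain in a common compact set, i.e. to rule out escape of mass to infinity. This holds because $\mathrm{supp}(\varphi_k)$ is contained in the Minkowski sum $2^{-1}\mathrm{Esupp}(a_0)\Msum 2^{-2}\mathrm{Esupp}(a_1)\Msum\cdots$, which stays bounded as soon as the mask supports grow sub-exponentially, as they do in all the constructions of Section~\ref{sec:examples}. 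This interplay between the geometric contraction, which controls the amplitude, and the growth of the mask supports, which controls the spread, is the delicate part of the statement.
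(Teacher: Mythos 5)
Your argument is correct, and it runs on exactly the same technical engine as the paper's --- the factorization $S_{a}-S_{a'}=S_{D}\nabla$ from \cite{CharinaContiSauer}, the commutation $\nabla S_{a_k}^j=S_{b_k}^j\nabla$, the bound $\|\nabla g_{k+1}\|=\|S_{b_k}\cdots S_{b_0}\nabla\delta\|\le 2^d\rho^{k+1}$, and the non-expansiveness $\|S_{a_k}\|=1$ --- but you deploy it in a different decomposition. The paper first proves Lemma~\ref{lemma:new}, which compares each ``frozen'' iteration $S_{a_k}^{r+1}S_{a_{k-1}}\cdots S_{a_0}\delta$ with the genuine non-stationary iteration $S_{a_{k+r}}\cdots S_{a_0}\delta$, and then gets the Cauchy property of $\{\varphi_k\}$ from a four-term triangle inequality for $\|\varphi_{m+n}-\varphi_m\|$ routed through that non-stationary sequence. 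You instead compare two consecutive frozen schemes directly, telescoping $S_{a_{k+1}}^{r}-S_{a_k}^{r}$ against the common input $g_{k+1}$ and using $S_{a_k}^{r+1}g_k=S_{a_k}^{r}g_{k+1}$ to match the grids. This buys you the explicit, $r$-uniform rate $\|\varphi_{k+1}-\varphi_k\|\le\frac{2^{d+1}}{1-\rho}\,\rho^{k+1}$, which is summable and therefore more quantitative than the paper's qualitative ``all four terms tend to zero''; the paper's organization has the advantage that Lemma~\ref{lemma:new} is reused verbatim in the proof of Theorem~\ref{th:convergence}, so the telescoping estimate is only done once. Your care with the matched scale $2^{-(k+r+1)}$ and the density of the dyadic grids makes explicit a bookkeeping step the paper leaves implicit.

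On the nonzero claim you are more scrupulous than the paper, which disposes of it with ``due to \eqref{5} and the assumption.'' Your normalization $\int_{\R^d}\varphi_k=1$ is correct (it follows from $\sum_{\alpha}a_k(\alpha)=2^d$ together with the $2^{-dk}$ scaling of the grid), but, as you yourself point out, passing this to the uniform limit requires the supports of the $\varphi_k$ to remain in a fixed compact set, and that does \emph{not} follow from $a_k\in{\mathscr A}(1,\rho)$ alone, since this class places no restriction whatsoever on the mask supports; the needed control is only introduced later in the paper via \textbf{Assumption S} and holds in all the examples of Section~\ref{sec:examples}. So your proof of the nonzero part quietly uses a hypothesis absent from the statement. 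An alternative that stays within the stated hypotheses is to observe that your geometric estimate gives $\|\varphi\|\ge\|\varphi_0\|-\frac{2^{d+1}\rho}{(1-\rho)^2}$, which settles nonvanishing whenever $\|\varphi_0\|$ is large enough, but in general the assertion deserves either the support condition you identify or a sharper argument; this is a genuine soft spot of the proposition as written, not of your proof.
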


\begin{proof} Due to the convergence of each stationary subdivision scheme $S_{a_k}$, $k \in N_0$, the limits $\varphi_k$ in \eqref{th:convergence_aux1} exist  and are at least continuous 
for $k\in \N_0$. For $m,n,r \in \N$, we write
{\begin{eqnarray*}
 \|\varphi_{m+n}-\varphi_m\|  \le \|\varphi_{m+n}-S_{a_{m+n}}^{r} S_{a_{m+n-1}} \ldots S_{a_0} \delta\| &+&
 \|\varphi_{m}-S_{a_{m}}^{r+n} S_{a_{m-1}} \ldots S_{a_0} \delta\|  \\ 
 &+& \|S_{a_{m+n}}^r S_{a_{m+n-1}} \ldots S_{a_0} \delta - S_{a_{m}}^{r+n} S_{a_{m-1}} \ldots S_{a_0} \delta \| \\
 \\
\le  \|\varphi_{m+n}-S_{a_{m+n}}^{r} S_{a_{m+n-1}} \ldots S_{a_0} \delta\| &+&
 \|\varphi_{m}-S_{a_{m}}^{r+n} S_{a_{m-1}} \ldots S_{a_0} \delta\|  \\ 
\|S_{a_{m+n}}^r S_{a_{m+n-1}} \ldots S_{a_0} \delta - S_{a_{m+n+r-1}} \ldots S_{a_0}  \delta \| &+&  \|S_{a_{m+n+r-1}} \ldots S_{a_0} \delta -S_{a_{m}}^{r+n} S_{a_{m-1}} \ldots S_{a_0} \delta \|
\end{eqnarray*}}
Thus, due to the convergence of each $S_{a_k}$ the first two terms in the last sum above converge to zero
when $r$ tends to infinity while by Lemma \ref{lemma:new}, the last two terms converge to zero when $n,r$ tend to infinity.
Therefore,  $\{\varphi_k\}_{k\in\N_0}$ is a Cauchy sequence of continuous functions uniformly converging to a continuous limit, $\varphi$. This limit is nonzero due to  \eqref{5} and the assumption $a_k \in {\mathscr A}(1,\rho),\ k\in \N_0$.
\end{proof}

Now we are ready to prove our main convergence result.

\begin{Theorem} \label{th:convergence} A non-stationary scheme $(S_{a_k} \, : \,  k\in \N_0)$ with $a_k \in {\mathscr A}(1,\rho),  \, k\in \N_0$ , and $\rho<1$, is convergent. 
\end{Theorem}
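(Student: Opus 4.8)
The plan is to show that the non-stationary scheme $(S_{a_k})$ converges to the limit $\varphi$ whose existence is guaranteed by Proposition~\ref{prop:main_up}. Concretely, I would verify that for the initial data $\delta$ one has
$$
 \lim_{r\to\infty}\| \varphi - S_{a_r}S_{a_{r-1}}\ldots S_{a_0}\delta\| = 0,
$$
and then argue that convergence for a general initial sequence $f^{[0]}$ follows by the usual linearity/translation-invariance reduction, together with the nondegeneracy of $\varphi$ established in Proposition~\ref{prop:main_up}.

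The key step is a triangle-inequality decomposition that inserts the stationary limits $\varphi_k$ as intermediate objects. I would write, for the partial non-stationary product $\Phi_r := S_{a_r}S_{a_{r-1}}\ldots S_{a_0}\delta$,
$$
 \|\varphi - \Phi_r\| \le \|\varphi - \varphi_r\| + \|\varphi_r - \Phi_r\|.
$$
The first term tends to zero because, by Proposition~\ref{prop:main_up}, $\{\varphi_k\}$ is Cauchy with limit $\varphi$, so $\|\varphi-\varphi_r\|\to 0$ as $r\to\infty$. The second term is exactly the quantity controlled by Lemma~\ref{lemma:new}: recalling that $\varphi_r = \lim_{s\to\infty} S_{a_r}^{s} S_{a_{r-1}}\ldots S_{a_0}\delta$, I would bound $\|\varphi_r - \Phi_r\|$ by first passing through $S_{a_r}^{s}S_{a_{r-1}}\ldots S_{a_0}\delta$ for large $s$ (which is close to $\varphi_r$ by the stationary convergence of $S_{a_r}$), and then invoking Lemma~\ref{lemma:new} to compare $S_{a_r}^{s}S_{a_{r-1}}\ldots S_{a_0}\delta$ with $S_{a_{r+s-1}}\ldots S_{a_0}\delta$. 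This last comparison is the $\Phi$-side estimate; the telescoping bound $2^{d+1}\rho^{r}/(1-\rho)$ from Lemma~\ref{lemma:new} makes it vanish as $r\to\infty$ uniformly in the auxiliary index.

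One technical point I would handle carefully is the bookkeeping of the grid-level rescaling in Definition~\ref{def:convergence}: the convergence statement involves $g(2^{-(k+1)}\cdot)$ rather than the raw sequences, so I must make sure that the limit $\varphi$ obtained as a uniform limit of the $\varphi_k$ is identified with the function $g$ sampled on the correct dyadic grids, and that the uniform ($\ell_\infty$) estimates above transfer to the required supremum-norm statement \eqref{eq:subdivisionlimit}. The main obstacle, I expect, is precisely aligning the two indices so that Lemma~\ref{lemma:new} applies verbatim: Lemma~\ref{lemma:new} controls $\|S_{a_k}^{r+1}\cdots\delta - S_{a_{k+r}}\cdots\delta\|$ as $k,r\to\infty$ jointly, and I must choose the auxiliary iteration count $s$ (depending on $r$) large enough that the stationary tail $\|\varphi_r - S_{a_r}^{s}\cdots\delta\|$ is negligible while keeping the Lemma's estimate uniformly small. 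Once this double-limit coordination is set up correctly, the convergence follows directly from the two cited results without any further computation.
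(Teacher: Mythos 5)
Your proposal is correct and takes essentially the same route as the paper's proof: the identical three-term triangle-inequality decomposition through $\varphi_k$ and $S_{a_k}^{s}S_{a_{k-1}}\ldots S_{a_0}\delta$, with the pieces controlled respectively by Proposition~\ref{prop:main_up}, the convergence of the stationary schemes $S_{a_k}$, and Lemma~\ref{lemma:new}. The double-index coordination you flag as the main technical point is handled in the paper exactly as you describe, by estimating $\|\varphi(2^{-(k+r+1)}\cdot)-S_{a_{k+r}}\ldots S_{a_0}\delta\|$ with both $k$ and $r$ tending to infinity.
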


\begin{proof}  By Proposition~\ref{prop:main_up}, there exists a nonzero limit 
of the sequence in \eqref{th:convergence_aux1}. We show that $\varphi$ is the basic limit
function of the non-stationary scheme, i.e. that
\begin{equation}\label{aux}
  \lim_{k \to\infty}  \| \varphi(2^{-(k+1)}\cdot)- S_{a_k} S_{a_{k-1}} \ldots S_{a_0} \delta \|= 0.
\end{equation}
The convergence in \eqref{aux} is ensured, if for any $\epsilon>0$ and for large enough $r \in \N$  and $k \in \N_0$ 
\begin{eqnarray*}\
  \| \varphi(2^{-{(k+r+1)}}\cdot)- S_{a_{k+r}} S_{a_{k+r-1}} \ldots S_{a_0} \delta \| &\le&
	\| \varphi(2^{-{(k+r+1)}}\cdot)-  \varphi_k(2^{-{(k+r+1)}}\cdot) \| + 
	\| \varphi_k(2^{-{(k+r+1)}}\cdot)-  S_{a_k}^{r+1} S_{a_{k-1}} \ldots S_{a_0} \delta \| \\
	&+& \|  S_{a_k}^{r+1} S_{a_{k-1}} \ldots S_{a_0} \delta - S_{a_{k+r}} S_{a_{k+r-1}} \ldots S_{a_0} \delta\| < \epsilon\,.
\end{eqnarray*} 
Due to the convergence of each stationary scheme $S_{a_k}$ we know that the second summand above goes to zero as $r\rightarrow \infty$, and by Proposition~\ref{prop:main_up} we know that the first summand goes to zero as $k\rightarrow \infty$.  Since due to 
Lemma \ref{lemma:new} we have that
$$
 \lim_{k,r \rightarrow \infty} \|  S_{a_k}^{r+1} S_{a_{k-1}} \ldots S_{a_0} \delta - S_{a_{k+r}} S_{a_{k+r-1}} \ldots S_{a_0} \delta\|  =0\, ,
$$
the proof is completed.
\end{proof}

\begin{Remark}\label{important}
Note that from the above theorem we have that $\phi_0\equiv \varphi=\lim_{k\rightarrow \infty}\varphi_k$.
\end{Remark}

A direct consequence of the method of proof of Theorem \ref{th:convergence}, leads to the following more general result \costi{that involves masks in $\in {\mathscr A}(L,\rho)$ with $L>1$ and $\rho<1$}.
\begin{Theorem} \label{teo:convergence2} A non-stationary scheme $(S^{L(a_k)}_{a_k} \, : \,  k\in \N_0)$  is convergent if $\sup_{k\in \N_0}L(a_k) \le L<\infty$, $\sup_{k\in \N_0}{\rho(a_k)}\le \rho <1$, and  $a_k \in {\mathscr A}(L,\rho),  \, k\in \N_0$. 
\end{Theorem}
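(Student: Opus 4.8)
The plan is to mimic the proof of Theorem~\ref{th:convergence} almost verbatim, replacing single applications of each operator $S_{a_k}$ by the $L(a_k)$-fold applications $S_{a_k}^{L(a_k)}$ that now constitute the scheme. The key observation is that the hypotheses of Theorem~\ref{th:convergence} were used only through the two quantitative facts $\|S_{a_k}\|=1$ and $\|S_{b_k}\|\le\rho<1$; the restriction to $L=1$ entered solely because the contraction estimate $\|S_{b_k}^j\|\le\rho^j$ was available after a single power. When $a_k\in{\mathscr A}(L,\rho)$ with $L>1$, property $3)$ of Definition~\ref{def_A0N} together with Definition~\ref{def:LcontractivityNS} tells us instead that $\|S_{b_k}^{L(a_k)}\|\le\rho(a_k)\le\rho$, so contraction occurs on blocks of length $L(a_k)\le L$ rather than after every step. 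Since the scheme applies exactly the block $S_{a_k}^{L(a_k)}$ at level $k$, these blocks are precisely the right unit, and the geometric decay that drove the original argument is recovered at the level of blocks.

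Concretely, I would first re-prove the analogue of Lemma~\ref{lemma:new} for the blocked scheme. One sets $\Phi_k=S_{a_k}^{L(a_k)}$ and writes the same telescoping identity
$$
 \Phi_k^{r+1}\Phi_{k-1}\cdots\Phi_0-\Phi_{k+r}\cdots\Phi_0
 =\sum_{j=1}^{r}\Phi_{k+r}\cdots\Phi_{k+j+1}\,(\Phi_k-\Phi_{k+j})\,\Phi_k^{\,j}\,\Phi_{k-1}\cdots\Phi_0,
$$
where each factor $\Phi_k$ is itself a convergent stationary scheme in the sense of \eqref{def:stationary_subdivision}, so that $\Phi_k-\Phi_{k+j}=S_{E_{k,j}}\nabla$ for a suitable bounded operator $S_{E_{k,j}}$, exactly as \cite[Lemma 3]{CharinaContiSauer} supplied the factorization in the original proof. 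Using \eqref{eq:SaSb} repeatedly to commute $\nabla$ past $\Phi_k^{\,j}=S_{a_k}^{jL(a_k)}$ produces the difference scheme $S_{b_k}^{\,jL(a_k)}$, and by level contractivity $\|S_{b_k}^{\,jL(a_k)}\|=\|(S_{b_k}^{L(a_k)})^{j}\|\le\rho(a_k)^{j}\le\rho^{j}$. The norms $\|\Phi_k\|=\|S_{a_k}^{L(a_k)}\|\le 1$ follow from $\|S_{a_k}\|=1$, which in turn holds by \eqref{5} and the non-negativity of the masks. One therefore arrives at the same bound $2^{d+1}\rho^{k}\sum_{j\ge 1}\rho^{j}$, and the block-telescoping difference vanishes as $k,r\to\infty$.

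With this blocked Lemma in hand, the rest is a transcription of Proposition~\ref{prop:main_up} and of Theorem~\ref{th:convergence}: the stationary limits $\varphi_k=\lim_{r\to\infty}\Phi_k^{\,r}\Phi_{k-1}\cdots\Phi_0\,\delta$ still exist and are continuous (each $\Phi_k=S_{a_k}^{L(a_k)}$ is a convergent stationary scheme by property $2)$ of Definition~\ref{def_A0N}), they form a Cauchy sequence converging to a nonzero continuous $\varphi$, and the three-term splitting of \eqref{aux}, with single operators replaced by blocks, shows $\varphi$ is the basic limit function of $(\,\Phi_k\,)_{k\in\N_0}$. I expect the only genuine subtlety to be bookkeeping: the blocks have variable length $L(a_k)$, so the dilation argument underlying the sampling factor $2^{-(k+1)}$ in Definition~\ref{def:convergence} must be reconciled with the fact that the blocked scheme advances the dyadic level by $L(a_k)$ at step $k$ rather than by $1$. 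This is a matter of relabelling the refinement levels consistently (tracking the cumulative dilation $\sum_{i\le k}L(a_i)$) rather than a new analytic difficulty, since the uniform bounds $L(a_k)\le L$ and $\rho(a_k)\le\rho$ keep all the estimates uniform; the geometric factor $\rho^{k}$ continues to force convergence.
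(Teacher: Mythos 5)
Your proposal is correct and is exactly what the paper intends: the paper gives no separate proof of Theorem~\ref{teo:convergence2}, stating only that it is ``a direct consequence of the method of proof of Theorem~\ref{th:convergence}'', and your block-wise adaptation (treating $\Phi_k=S_{a_k}^{L(a_k)}$ as the unit step so that $\|S_{b_k}^{L(a_k)}\|\le\rho$ replaces $\|S_{b_k}\|\le\rho$) is precisely that adaptation, carried out in detail. The relabelling of dyadic levels you flag is indeed the only bookkeeping point, and it is handled correctly by tracking the cumulative dilation.
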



\begin{Remark} \label{rem:convergence} 
In the same fashion, we can define a convergent non-stationary scheme $(S^{m_k \cdot L(a_k)}_{a_k} \, : \,  k\in \N_0)$  
with the operator $S_{a_k}$ repeated $m_k \cdot L(a_k)$ times for arbitrary $m_k \in \N$. 
\end{Remark}

\section{Smoothness analysis} \label{sec:regularity_up}

This section is devoted to the smoothness analysis of subdivision schemes with masks in special classes characterized by symbols containing different numbers of multivariate smoothing factors.  In particular, Corollary~\ref{coro:3} allows us to conclude the smoothess of our multivariate Up-like functions, which is one of the main goals of our paper.

\subsection{Smoothness of stationary schemes via smoothing factors} 
We start with the definition of a multivariate smoothing factor and of its symbol.

\begin{Definition}
A {\it symbol of a directional smoothing factor}  in ${\mathbb R}^d$ in direction $v\in {\mathbb N_0^d}$ is  $$\frac{1+z^ v}{2},\quad \hbox{where}
 \quad z=(z_1,\ldots,z_d),\ \  v=(v_1,\ldots, v_d),\ \ z^v=(z_1^{v_1},\ldots,z_d^{v_d}),\quad  z \in \left( \C \setminus \{0\}\right)^d.$$
The {\it symbol of a (full) smoothing factor} in ${\mathbb R}^d$, $s_V(z)$,  is a product of the symbols of $d$ directional smoothing factors in $d$ linearly independent directions in ${\mathbb N_0^d}$
\begin {equation}
\label {fullsf}
 s_V(z)= {\displaystyle \Pi_{j=1}^d\frac{1+z^{ v^{[j]}}}{2} },\quad V=\{v^{[1]},\ldots,v^{[d]}\},\, \quad  z \in \left( \C \setminus \{0\}\right)^d.
\end{equation} 
\end{Definition}

We continue by presenting four results, Propositions \ref{prop:result1}, \ref{prop:result2}, \ref{prop:36} and \ref{prop:result3},  on stationary schemes which are relevant to the convergence and smoothness analysis of non-stationary schemes with masks of growing supports.

\begin{Proposition}\label{prop:result1}
 Let  $S_a$ be a converging stationary subdivision scheme with $a(z)$  a symbol  with positive coefficients.  Let $V=\{v^{[1]},\ldots,v^{[d]}\}$ be $d$ linear independent directions of ${\mathbb N_0^d}$,  and  let 
$ D\subset  {\mathbb N_0^d}$ be a collection of directions with possible repeated directions that are considered as different elements of $D$.  The  symbol 
	\begin{equation}\label{def:c}c(z)=a(z)\left(\Pi_{v\in D}\frac{1+z^v}{2}\right)\left(\Pi_{j=1}^d\frac{1+z^{v^{[j]}}}{2}\right), \quad  z \in \left( \C \setminus \{0\}\right)^d, \end{equation}
determines a scheme $S_c$ which is a converging scheme, satisfying
\begin {equation}
\label{firstdiff}
\nabla_V S_c=S_b \nabla_V\ , 
\end{equation}
with $b(z)$ a diagonal matrix with diagonal elements 
\begin{equation}
\label{diag}
b_{i,i}(z)=\frac{a(z)}{2}\left(\Pi_{v\in D}\frac{1+z^v}{2}\right)\left(\Pi_{j=1,\, j\neq i}^d\frac{1+z^{v^{[j]}}}{2}\right)  ,\ \ i=1,\ldots,d,\quad  z \in \left( \C \setminus \{0\}\right)^d. 
\end{equation}
Moreover, $S_c\in \cA(1,\frac12)$.
\end{Proposition}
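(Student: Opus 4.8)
The plan is to verify the three defining properties of the class $\cA(1,\tfrac12)$ from Definition~\ref{def_A0N}, namely non-negativity of the mask entries, convergence of the stationary scheme $S_c$, and $(1,\tfrac12)$-level contractivity, most of which follow from statements already established in this Proposition. The non-negativity is immediate: by hypothesis $a(z)$ has positive coefficients, and each smoothing factor $\tfrac{1+z^v}{2}$ has non-negative coefficients, so the product $c(z)$ in \eqref{def:c} has non-negative coefficients. Convergence of $S_c$ is asserted in the statement of the Proposition itself and may be assumed as already proved by the time we reach the final sentence.

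The heart of the matter is the contractivity estimate, and here I would exploit the factorization \eqref{firstdiff}, $\nabla_V S_c = S_b\,\nabla_V$, together with the explicit diagonal form \eqref{diag}. Since $b(z)$ is diagonal, the norm $\|S_b\|$ computed via \eqref{eq:norma} decouples across the diagonal entries, so it suffices to bound $\|S_{b_{i,i}}\|$ for each $i=1,\ldots,d$. From \eqref{diag} each diagonal symbol has the form $b_{i,i}(z)=\tfrac12\,\tilde a_i(z)$, where $\tilde a_i(z)=a(z)\bigl(\Pi_{v\in D}\tfrac{1+z^v}{2}\bigr)\bigl(\Pi_{j\ne i}\tfrac{1+z^{v^{[j]}}}{2}\bigr)$ is itself a symbol with non-negative coefficients that is a mask of a convergent scheme (a product of the convergent $a(z)$ with smoothing factors), hence satisfies the sum-rule \eqref{5}. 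By \eqref{eq:norma} and \eqref{5}, a non-negative mask satisfying the sum rule has operator norm equal to $1$. Therefore $\|S_{b_{i,i}}\|=\tfrac12\,\|S_{\tilde a_i}\|=\tfrac12$ for every $i$, and consequently $\|S_b\|=\tfrac12<1$.

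This gives $\|S_b^1\|=\tfrac12<1$ directly, so the minimal $n$ with $\|S_b^n\|<1$ is $L=1$ and the contraction factor is $\rho=\tfrac12$, which is exactly the $(1,\tfrac12)$-level contractivity required in Definition~\ref{def:LcontractivityNS}. Combining the three verified properties yields $S_c\in\cA(1,\tfrac12)$.

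The main obstacle I anticipate is the clean justification of the norm identity $\|S_{b_{i,i}}\|=\tfrac12$. One must be careful that the factor $\tfrac{a(z)}{2}$ in \eqref{diag} is the only source of the contraction: the remaining factors form a convergent non-negative mask whose norm is exactly $1$, not merely $\le 1$, and this equality is what pins $\rho$ down to $\tfrac12$ rather than some smaller unspecified value. The cleanest route is to observe that for any non-negative sequence the max over $\epsilon$ in \eqref{eq:norma} of the coset sums is at least the average of all coset sums, which equals $1$ by the sum rule, while each coset sum is at most the total sum $1$; hence every coset sum equals $1$ and the norm is exactly $1$. Feeding the factor $\tfrac12$ back through then gives the stated value, and care with the diagonal decoupling of $\|S_b\|$ completes the argument.
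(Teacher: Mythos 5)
Your contractivity argument is essentially the one in the paper: decouple $\|S_b\|$ over the diagonal entries, observe that a non-negative mask satisfying the sum rule \eqref{5} has subdivision-operator norm $1$, and read off the factor $\tfrac12$ from \eqref{diag}. (The paper only records $\|S_{b_{i,i}}\|\le\tfrac12$, using that multiplying a positive mask by a smoothing factor does not increase the norm; your exact equality is a harmless refinement. Your justification via ``each coset sum is at most the total sum $1$'' is off, though --- the total sum of a mask satisfying \eqref{5} is $2^d$ --- the correct and immediate point being that the sum rule together with non-negativity forces every coset sum of absolute values to equal $1$.)

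The genuine gap is structural: you assume the two claims of the Proposition that have to be proved before the membership $S_c\in\cA(1,\tfrac12)$ can be asserted. First, the factorization \eqref{firstdiff} with the diagonal $b$ of \eqref{diag} is itself part of the statement; it follows from the symbol identity $(z^{-v^{[i]}}-1)c(z)=(z^{-2v^{[i]}}-1)\,\frac{a(z)}{2}\bigl(\Pi_{v\in D}\frac{1+z^v}{2}\bigr)\bigl(\Pi_{j\neq i}\frac{1+z^{v^{[j]}}}{2}\bigr)$, which you use but never derive. Second, and more seriously, you take the convergence of $S_c$ as ``asserted in the statement and already proved,'' but nothing prior establishes it; in the paper convergence is a \emph{consequence} of the contractivity you prove, via the standard criterion that $\nabla_V S_c=S_b\nabla_V$ with $\|S_b\|<1$ implies convergence. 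As written, your verification of property $2)$ of Definition~\ref{def_A0N} is circular. A related circularity appears when you claim that $\tilde a_i$ ``is a mask of a convergent scheme, hence satisfies the sum rule'': convergence of $S_{\tilde a_i}$ has not been established either. The sum rule for $\tilde a_i$ should instead be checked directly: each factor $\frac{1+z^v}{2}$ preserves \eqref{5}, since it equals $1$ at $z=(1,\ldots,1)$ while $a(z)$ already vanishes at the remaining points of $\{-1,1\}^d$. All of these steps are repairable, but they must be supplied for the proof to stand.
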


\begin{proof}
First we derive (\ref {firstdiff}). We denote 
$\tilde{V}^{[i]}:=V \setminus \{v^{[i]}\}$  the collection of all the directions in $V$ except for $v^{[i]}$, and by $s_D(z)$, $s_{\tilde{V}^{[i]}}(z)$
the symbols
$$ s_D(z)=\Pi_{v\in D}\frac{1+z^v}{2}\ ,\quad  s_{\tilde{V}^{[i]}}(z)=\Pi_{j=1,\, j\neq i}^d\frac{1+z^{v^{[j]}}}{2},\quad  z \in \left( \C \setminus \{0\}\right)^d.
$$
Applying the difference in direction $v^{[i]}$  to $S_c$, we get in terms of symbols
$$(z^{-v^{[i]}}-1)c(z)=(z^{-v^{[i]}}-1) \frac{1+z^{v^{[i]}}}{2}a(z)s_D(z)s_{\tilde{V}^{[i]}}(z) =(z^{-2v^{[i]}}-1)\frac{a(z)}{2}s_D(z)s_{\tilde{V}^{[i]}}(z),\quad  z \in \left( \C \setminus \{0\}\right)^d.$$
Writing the above equation in terms of operators, we get
 $\nabla_{v^{[i]}} S_c=S_{b_{i,i}}\nabla_{v^{[i]}}$ with $b_{i,i}(z)$  as in (\ref {diag}).
 Since the above holds for all $ i=1,\ldots,d $,  (\ref {firstdiff}) follows with $S_b$ a diagonal matrix with diagonal elements as in  (\ref{diag}). 
 The convergence of $S_c$ follows from (\ref{firstdiff}), (\ref{diag}) and  the observations 
\begin {enumerate}
\item 
$\|S_b\|=\max_{i=1,\ldots,d}\|S_{b_{i,i}}\|$, 
\item 
For any positive mask $e$ such that $S_e$ is a converging scheme, 
$ \|S_e\|=1\ ,$
\item 
The symbol $e_u(z)=e(z) \frac{1+z^u}{2}$ with $u\in{\mathbb N_0^d}$ and $e$ a positive mask, has positive coefficients and $\|S_{e_u}\|\le\|S_e\|$.
\end {enumerate}
Combining these observations with $ (\ref {diag})$, we get
$\|S_{b_{i,i}}\|\le\frac{1}{2}$, and therefore $\|S_b\|\le\frac{1}{2}$. This, together with $ (\ref{firstdiff})$, implies that\  $S_c\in \cA(1,\frac12)$. \end{proof}

\begin{Proposition}\label{prop:result2}
In the notation of Proposition \ref{prop:result1} and its proof, let  $ c(z)= a(z) s_D(z) s_V(z)$. Then, the first divided differences in direction $v^{[i]}$ of the data generated by $S_c$,
are mapped from one refinement level to its next refinement level  by $S_{c^*_i}$ with $c^*_i(z)=a(z)s_D(z)s_{\tV^{[i]}}(z)$.
\end{Proposition}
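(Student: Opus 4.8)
The plan is to leverage the difference-operator factorization already established in Proposition~\ref{prop:result1} and to convert it into a statement about \emph{divided} differences by carefully tracking the mesh-size rescaling between consecutive refinement levels. First I would recall that Proposition~\ref{prop:result1}, applied with the present choice $c(z)=a(z)s_D(z)s_V(z)$ (i.e.\ $D$ as given and no extra repeated directions beyond those needed), yields the operator identity $\nabla_{v^{[i]}} S_c = S_{b_{i,i}} \nabla_{v^{[i]}}$, where the scalar symbol $b_{i,i}(z)=\tfrac{a(z)}{2}\,s_D(z)\,s_{\tV^{[i]}}(z)$ is exactly the $i$-th diagonal entry of the matrix symbol $b$. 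Writing $f^{[r]}=S_c^r f^{[0]}$ for the data at level $r$, this identity gives at once $\nabla_{v^{[i]}} f^{[r+1]} = S_{b_{i,i}} \nabla_{v^{[i]}} f^{[r]}$.

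Next I would make precise what the \emph{first divided difference} at level $r$ is. Since the samples $f^{[r]}(\alpha)$ are attached to the grid $2^{-r}\Z^d$, the geometric step associated with the index shift $\alpha\mapsto\alpha+v^{[i]}$ has length proportional to $2^{-r}$, so the divided difference in direction $v^{[i]}$ is $d_i^{[r]} = 2^{r}\,\nabla_{v^{[i]}} f^{[r]}$ (any further fixed normalizing constant, such as $|v^{[i]}|$, is level-independent and cancels in the recursion, hence immaterial). Substituting the relation from the first step, $d_i^{[r+1]} = 2^{r+1}\,\nabla_{v^{[i]}} f^{[r+1]} = 2\,S_{b_{i,i}}\bigl( 2^{r}\,\nabla_{v^{[i]}} f^{[r]} \bigr) = 2\,S_{b_{i,i}}\, d_i^{[r]}$.

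Finally, because the subdivision operator depends linearly on its mask, $2\,S_{b_{i,i}} = S_{2 b_{i,i}}$, and the elementary symbol computation $2 b_{i,i}(z) = a(z)\,s_D(z)\,s_{\tV^{[i]}}(z) = c^*_i(z)$ identifies this operator with $S_{c^*_i}$. Hence $d_i^{[r+1]} = S_{c^*_i}\, d_i^{[r]}$, which is precisely the asserted level-to-level recursion for the first divided differences.

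I expect the only genuine subtlety — and the single point where the argument could go astray — to be the bookkeeping of the factor $2$ that distinguishes divided differences from plain differences. This factor is exactly what upgrades the contraction-bearing diagonal symbol $b_{i,i}$ of Proposition~\ref{prop:result1} into the divided-difference symbol $c^*_i$; getting its power right (exactly $2^{1}$ per level, independently of the length of $v^{[i]}$ or of the dimension $d$) is the crux, and it is what makes $c^*_i(z)=2\,b_{i,i}(z)$ rather than $b_{i,i}(z)$ itself.
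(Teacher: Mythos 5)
Your proof is correct and follows essentially the same route as the paper: both establish $\nabla_{v^{[i]}} S_c = S_{\frac12 c^*_i}\nabla_{v^{[i]}}$ (your $b_{i,i}=\frac12 c^*_i$ from Proposition~\ref{prop:result1} is exactly the symbol the paper recomputes directly) and then absorb the per-level factor $2$ from the mesh rescaling into the mask to obtain $S_{c^*_i}$. The only cosmetic difference is that you cite Proposition~\ref{prop:result1} for the difference relation rather than redoing the one-line symbol computation.
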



\begin{proof}
Applying the difference in direction $v^{[i]}$  to $S_c$, we get in terms of symbols
$$(z^{-v^{[i]}}-1)c(z)=(z^{-v^{[i]}}-1) \frac{1+z^{v^{[i]}}}{2}a(z)s_D(z)s_{\tV^{[i]}}(z) =(z^{-2v^{[i]}}-1)\frac{c^*_i(z)}{2},\quad  z \in \left( \C \setminus \{0\}\right)^d.$$
 The above equation, written in terms of operators, is
$\nabla_{v^{[i]}} S_c=S_{\frac{1}{2}c^*_i}\nabla_{v^{[i]}}.$
 Multiplying the last equation by $\frac{2^{k+1}}{\|v^{[i]}\|_2}$, we finally obtain
\begin {equation}
\frac{\nabla_{v^{[i]}}}{2^{-(k+1)}\|v^{[i]}\|_2}
S_c=S_{c^*_i}\frac{\nabla_{ v^{[i]}}}{2^{-k}\|v^{[i]}\|_2}\ , 
\end {equation} which completes the proof.
 \end{proof}

In order to state and prove our next result, we introduce the following sets of masks.  
\begin{Definition}\label{def:Nirasymbols1}
${\mathscr C_0} $ consists of masks with symbols which are products of a positive symbol of a converging stationary scheme, times $s_D(z)$ with $D$  any collection of directions in 
${\mathbb N_0^d}$,  with possible repetitions of directions. 
\end{Definition}

The next sets of masks depend on a sequence  of bases in ${\mathbb N_0^d }$, denoted by ${\mathcal V}=\{V_j\}_{j=1}^\infty$.

\begin{Definition}\label{def:Nirasymbols2}
Let  ${\mathcal V}=\{V_j\}_{j=1}^\infty$ be a sequence  of bases in ${\mathbb N_0^d }$. The sets of  masks ${\mathscr C}_j^{{\mathcal V}},\  j\ge 1$ is recursively defined from the starting set ${\mathscr C}_0^{{\mathcal V}}:={\mathscr C_0}$, as
$${\mathscr C}_j^{{\mathcal V}},   \hbox{consists of masks with symbols  that  are  products  of a symbol of a mask in}\   {\mathscr C}_{j-1}^{\mathcal V}\ \hbox{multiplied by}\ s_{V_j}(z).$$
\end{Definition}
For example, the mask $c$  with symbol in \eqref{def:c} is in  ${\mathscr C}_1^{\mathcal V} $ for any ${\mathcal V}$ with $V_1=V$.

\begin{Proposition}\label{prop:36}
The sets of masks ${\mathscr C}_j^{\mathcal V},\  j\ge 1$ satisfy
\begin {equation}\label {containment} 
 {\mathscr C}_j^{\mathcal V}\subset  {\mathscr C}_{j-1}^{\mathcal V}\ ,\ \ j\ge 1\ .
\end {equation}
Moreover, all the masks in  $ {\mathscr C}_j^{\mathcal V},\ j\ge 1$ are in $\cA(1,\frac12)$.
\end{Proposition}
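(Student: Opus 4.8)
The plan is to prove the two assertions of Proposition~\ref{prop:36} separately, relying heavily on Propositions~\ref{prop:result1} and~\ref{prop:result2} which have already set up the relevant factorization machinery.

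First I would establish the containment \eqref{containment}. The key observation is the definition of the sets: a mask in ${\mathscr C}_j^{\mathcal V}$ has a symbol of the form $g(z)\,s_{V_j}(z)$, where $g$ is the symbol of some mask in ${\mathscr C}_{j-1}^{\mathcal V}$ and $s_{V_j}(z)=\Pi_{k=1}^d\frac{1+z^{v^{[k]}_j}}{2}$ is a full smoothing factor in the directions of the basis $V_j$. Writing this out, a ${\mathscr C}_j^{\mathcal V}$-symbol is $a(z)\,s_D(z)\,s_{V_1}(z)\cdots s_{V_j}(z)$ for some positive convergent base symbol $a$ and some direction collection $D$. The point is that absorbing the single extra factor $s_{V_j}(z)$ into the ``$s_D$'' part exhibits the same symbol as an element of ${\mathscr C}_{j-1}^{\mathcal V}$: indeed $s_{V_j}(z)$ is itself a product of $d$ directional smoothing factors $\frac{1+z^{v}}{2}$ with $v\in{\mathbb N}_0^d$, so the factors of $s_{V_j}$ can be merged with $s_D$ to form a new collection $D'=D\cup V_j$ (with repetitions allowed, as the definition of ${\mathscr C}_0$ permits). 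Hence every ${\mathscr C}_j^{\mathcal V}$-symbol is already a ${\mathscr C}_{j-1}^{\mathcal V}$-symbol, which is exactly \eqref{containment}. By induction this also shows ${\mathscr C}_j^{\mathcal V}\subset{\mathscr C}_0$ for every $j\ge 1$.

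Next I would prove the membership in $\cA(1,\frac12)$. By the containment just established, it suffices to treat the case $j=1$, since for $j\ge 1$ any mask in ${\mathscr C}_j^{\mathcal V}$ lies in ${\mathscr C}_1^{\mathcal V}$ with respect to the basis $V_1$ (more precisely, every mask in ${\mathscr C}_j^{\mathcal V}$ has a symbol of the form $\tilde a(z)\,s_V(z)$ where $\tilde a$ is a positive convergent symbol times some $s_{D'}(z)$, and $V$ is the last basis used). The cleanest route is to observe that any ${\mathscr C}_j^{\mathcal V}$-symbol, for $j\ge 1$, can be written in precisely the form \eqref{def:c} of Proposition~\ref{prop:result1}: take the positive convergent base symbol as the ``$a(z)$'' there, collect all directional factors except for one full basis $V$ into the ``$s_D(z)$'' part, and use $V$ for the ``$s_V(z)$'' part. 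This is possible exactly because $j\ge 1$ guarantees at least one full smoothing factor is present, which supplies the required basis $V$ of $d$ linearly independent directions. Proposition~\ref{prop:result1} then yields directly that the associated scheme is convergent, satisfies \eqref{firstdiff} with a diagonal $S_b$, and most importantly that $\|S_b\|\le\frac12$, whence the scheme is $(1,\frac12)$-level contractive and therefore in $\cA(1,\frac12)$.

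The main obstacle, and the point that requires care rather than difficulty, is the bookkeeping of directions: one must verify that the regrouping of the smoothing factors is legitimate, i.e.\ that the leftover factors $s_D s_{V_1}\cdots s_{V_{j-1}}$ really do assemble into a positive symbol of the form $s_{D'}(z)$ with $D'\subset{\mathbb N}_0^d$ admissible for ${\mathscr C}_0$, and that the chosen basis $V$ genuinely consists of $d$ linearly independent directions so that Proposition~\ref{prop:result1} applies verbatim. Since each $V_j$ is by hypothesis a basis of ${\mathbb N}_0^d$ and positivity is preserved under products of factors $\frac{1+z^v}{2}$, both conditions hold, and no genuinely new estimate beyond Proposition~\ref{prop:result1} is needed; the proof is essentially a reduction.
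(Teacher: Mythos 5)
Your proposal is correct and follows essentially the same route as the paper: the containment is obtained by absorbing the factor $s_{V_j}(z)$ into the direction collection $D$, and membership in $\cA(1,\frac12)$ then follows by reducing every ${\mathscr C}_j^{\mathcal V}$-symbol to the form \eqref{def:c} and invoking Proposition~\ref{prop:result1}. The extra bookkeeping you spell out (that the regrouped factors form an admissible $s_{D'}$ and that a genuine basis remains for the $s_V$ part) is exactly the content the paper leaves implicit.
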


\begin{proof}
It is easy to conclude from the inductive definition of the sets ${\mathscr C}_j^{\mathcal V} $ that any $ c\in {\mathscr C}_j^{\mathcal V} $  has a symbol of the form
$c(z)=a(z)s_{D_c}(z) \Pi_{i=1}^j s_{V_{i}}(z)$ where $V_1,\ldots, V_j$  are  the first  $j$ elements in the sequence ${\mathcal V}$, $a(z)$  is a symbol with positive coefficient of a converging scheme and $D_c$
is a collection of directions in ${\mathbb N_0^d}$ with possible repetitions of directions. Repeated directions are considered as different elements of $D_c$.
Including the directions  of $V_j$  in $D_c$,  we conclude that $c \in {\mathscr C}_{j-1 }^{\mathcal V} $. Thus, the masks in any ${\mathscr C}_j^{\mathcal V}$ for $j\ge 1$ are also in ${\mathscr C}_1^{\mathcal V}$, and therefore by Proposition \ref{prop:result1} these masks are in $\cA(1,\frac12)$.
\end{proof}

We are now ready to state and prove the next proposition \costi{dealing with the smoothness analysis of subdivision schemes with masks in ${\mathscr C}^{\cal V}_{m}$.}

\begin{Proposition}\label{prop:result3}
Let ${\mathcal V}=\{V_j\}_{j=1}^\infty$ be a sequence of basis in ${\mathbb N_0^d}$ and $c\in {\mathscr C}^{\cal V}_{m}$ with $m\ge 1$. Then $S_c$ generates $C^{m-1}$  limits.
\end{Proposition}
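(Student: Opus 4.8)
\section*{Proof proposal}

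The plan is to argue by induction on $m\ge 1$, peeling off one full smoothing factor at a time by means of the divided-difference relation of Proposition~\ref{prop:result2} and raising the smoothness exponent by one at each descent. Throughout, note first that by Proposition~\ref{prop:36} every $c\in{\mathscr C}_m^{\mathcal V}$ lies in $\cA(1,\tfrac12)$ and is in particular convergent, so the basic limit function $\phi_c=\lim_{k}S_c^k\delta$ exists; establishing that $\phi_c\in C^{m-1}$ then yields the assertion for the scheme.

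For the base case $m=1$, a mask $c\in{\mathscr C}_1^{\mathcal V}$ has symbol $c(z)=a(z)s_{D_c}(z)s_{V_1}(z)$ with $a$ a positive symbol of a converging scheme, so by Proposition~\ref{prop:result1} the scheme $S_c$ converges; that is, it generates $C^0=C^{m-1}$ limits. For the inductive step I would fix $m\ge 2$, assume every mask in ${\mathscr C}_{m-1}^{\mathcal V}$ generates $C^{m-2}$ limits, and take $c\in{\mathscr C}_m^{\mathcal V}$. Using the characterization in the proof of Proposition~\ref{prop:36}, I would write $c(z)=a(z)s_{\hat D}(z)s_{V_m}(z)$, where $s_{\hat D}$ absorbs $s_{D_c}$ together with all the directional factors of $s_{V_1},\ldots,s_{V_{m-1}}$. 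For each $i\in\{1,\ldots,d\}$, applying Proposition~\ref{prop:result2} with $V=V_m$ produces the derived mask $c^*_i(z)=a(z)s_{\hat D}(z)s_{\tV_m^{[i]}}(z)$ governing the normalized first differences in direction $v_m^{[i]}$. Since $s_{\tV_m^{[i]}}$ is a product of $d-1$ directional factors, I would reabsorb them into $s_{\hat D}$; the full factors $s_{V_1},\ldots,s_{V_{m-1}}$ then exhibit $c^*_i$ in the form $a(z)s_{D'}(z)\prod_{j=1}^{m-1}s_{V_j}(z)$, so that $c^*_i\in{\mathscr C}_{m-1}^{\mathcal V}$. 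By the induction hypothesis $S_{c^*_i}$ generates $C^{m-2}$ limits.

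The last task is to lift the regularity. The scaled-difference identity of Proposition~\ref{prop:result2} shows that the normalized differences $\frac{\nabla_{v_m^{[i]}}}{2^{-k}\|v_m^{[i]}\|_2}S_c^k\delta$ coincide with the iterates $S_{c^*_i}^k$ of the initial normalized difference, hence converge to a $C^{m-2}$ function $\psi_i$, which the standard argument identifies with the directional derivative $D_{v_m^{[i]}}\phi_c$. Because $V_m$ is a basis of $\R^d$, the matrix whose rows are $v_m^{[1]},\ldots,v_m^{[d]}$ is invertible, so every first-order partial derivative of $\phi_c$ is a fixed linear combination of $\psi_1,\ldots,\psi_d$ and therefore lies in $C^{m-2}$. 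This gives $\phi_c\in C^{m-1}$ and closes the induction.

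I expect the main obstacle to be precisely this final lifting step: passing rigorously from convergence of the normalized-difference scheme $S_{c^*_i}$ to the genuine existence and continuity of the directional derivative of $\phi_c$ (the real analytic content, as opposed to the purely algebraic factoring done earlier), and then assembling the $d$ directional derivatives along the basis $V_m$ into full $C^{m-1}$ regularity. Everything preceding it is bookkeeping with symbols and repeated invocation of Propositions~\ref{prop:result1} and~\ref{prop:result2}.
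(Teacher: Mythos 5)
Your proposal is correct and follows essentially the same route as the paper: induction on $m$, factoring out $s_{V_m}$, invoking Proposition~\ref{prop:result2} to obtain the derived masks $c^*_i\in{\mathscr C}^{\cal V}_{m-1}$, applying the induction hypothesis, and assembling the $d$ directional derivatives along the basis $V_m$ into $C^{m-1}$ regularity. The paper is equally brief about the analytic step you flag (passing from convergence of the normalized-difference schemes to existence and continuity of the directional derivatives of the limit), treating it as standard subdivision theory.
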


\begin{proof}The proof is by induction  on $m\ge  1$. For $m=1$ the claim  follows from Proposition \ref{prop:result1} \costi{combined with Theorem \ref{th:convergence}}.
Assume that the claim holds for some $m\ge 1$. We show that then the claim holds for $m+1$.
To prove the claim for $m+1$, we take $c\in {\mathscr  C}^{\cal V}_{m+1}$.  Then, there exists  ${\tilde c}\in {\mathscr  C}^{\cal V}_{m}$, such that $c(z)={\tilde c}(z)s_{V_{m+1}}(z)$.

Introducing the symbols  $c^{\ast}_i(z)={\tilde c}(z)s_{{\tV}^{[i]}_{m+1}}(z),\ \ i=1,\ldots,d$ with ${\tV}^{[i]}_{m+1}$   the collection of directions in $V_{m+1}$ except $v^{[i]}_{m+1}$, we conclude, in view of Proposition \ref{prop:result2} , that 
 for $i=1,\ldots,d$, the scheme  $ S_{c^*_i}$ maps the  first divided differences in the direction  $v^{[i]}_{m+1}$ of the data generated by $S_c$,
from one refinement level to its next refinement level. Since $c^*_i\in {\mathscr C}^{\cal V}_{m}$, it follows from the induction hypothesis that the 
first directional derivatives of the limits of $S_c$ in the directions of $V_{m+1}$
are in $C^{m-1}$. Since $V_{m+1}$ is a basis, all these directions are linearly indeperndent, and therefore the limits of $S_c$ are in $C^{m}$.
This completes the proof of the induction hypothesis for $m+1$.
\end {proof}

\subsection{Smoothness of non-stationary schemes via smoothing factors}
Propositions  \ref{prop:result1}, \ref{prop:result2}, \ref{prop:36} and \ref{prop:result3},  on stationary subdivision schemes enable us to prove results on non-stationary subdivision schemes with masks in the sets ${\mathscr C}_j^{{\mathcal V}},\   j\in {\mathbb N}$.

\begin{Theorem}\label{Theorem1}
Let ${\mathcal V}=\{V_j\}_{j=1}^\infty$ be a sequence of basis in ${\mathbb N_0^d}$. Let the non-stationary scheme  $(S_{c_k} :  k\in {\mathbb N}_0) $,  with  $c_k\in {\mathscr C}^{\cal V}_{j_k},\  j_k
\in {\mathbb N}$. Then the scheme is converging, and its limits are $C^{j^*-1}$,  where $j^*$ is the maximal integer satisfying $j^*\le j_k$ for all  $k\in{\mathbb N}_0$.
\end{Theorem}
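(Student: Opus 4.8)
The plan is to separate the statement into its convergence and smoothness parts, to reduce the smoothness claim to a single common index by means of the containment of Proposition~\ref{prop:36}, and then to run an induction on that index that parallels the stationary argument of Proposition~\ref{prop:result3}. For convergence, by Proposition~\ref{prop:36} every mask $c_k\in{\mathscr C}^{\cal V}_{j_k}$ lies in $\cA(1,\frac12)$, so Theorem~\ref{th:convergence} immediately yields convergence of $(S_{c_k}:k\in\N_0)$ to a continuous basic limit function $\phi_0$. For the smoothness I would first invoke the containment \eqref{containment}: since $j^*\le j_k$ for every $k$, we have ${\mathscr C}^{\cal V}_{j_k}\subset{\mathscr C}^{\cal V}_{j^*}$, hence $c_k\in{\mathscr C}^{\cal V}_{j^*}$ for all $k$. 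It therefore suffices to prove the reduced statement: if $c_k\in{\mathscr C}^{\cal V}_{m}$ for every $k\in\N_0$, then the limit is $C^{m-1}$. I would establish this by induction on $m\ge1$, the case $m=1$ being exactly the $C^0$ conclusion already obtained from convergence.

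For the inductive step, assume the reduced statement for $m$ and take all $c_k\in{\mathscr C}^{\cal V}_{m+1}$. By Definition~\ref{def:Nirasymbols2} each $c_k$ factors as $c_k(z)=\tilde c_k(z)\,s_{V_{m+1}}(z)$ with $\tilde c_k\in{\mathscr C}^{\cal V}_{m}$. Fixing a basis direction $v^{[i]}_{m+1}\in V_{m+1}$, Proposition~\ref{prop:result2} applied at each level gives the factorization $\nabla_{v^{[i]}_{m+1}}S_{c_k}=S_{\frac12 c^{\ast}_{k,i}}\,\nabla_{v^{[i]}_{m+1}}$ with $c^{\ast}_{k,i}(z)=\tilde c_k(z)\,s_{{\tV}^{[i]}_{m+1}}(z)$. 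Since appending the directional factors $s_{{\tV}^{[i]}_{m+1}}$ only enlarges the $s_D$-part of the symbol form used in the proof of Proposition~\ref{prop:36}, each $c^{\ast}_{k,i}$ again lies in ${\mathscr C}^{\cal V}_{m}$.

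Iterating this factorization through the $r$ levels of the recursion and using $S_{\frac12 c}=\frac12 S_c$, I obtain $\nabla_{v^{[i]}_{m+1}}S_{c_{r-1}}\cdots S_{c_0}\delta=2^{-r}\,S_{c^{\ast}_{r-1,i}}\cdots S_{c^{\ast}_{0,i}}\,\nabla_{v^{[i]}_{m+1}}\delta$. After the divided-difference rescaling by $2^{r}/\|v^{[i]}_{m+1}\|_2$, these scaled differences are precisely the data generated by the non-stationary scheme $(S_{c^{\ast}_{k,i}}:k\in\N_0)$ started from $\nabla_{v^{[i]}_{m+1}}\delta/\|v^{[i]}_{m+1}\|_2$, which by the induction hypothesis converges to a $C^{m-1}$ limit. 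This identifies the directional derivative $\partial_{v^{[i]}_{m+1}}\phi_0$ as that $C^{m-1}$ function. As $V_{m+1}$ is a basis of $\R^d$, all $d$ linearly independent directional derivatives of $\phi_0$ are $C^{m-1}$, so $\phi_0\in C^{m}$, closing the induction.

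I expect the main obstacle to be the non-stationary derivative--limit link in the inductive step. The convergence statement, the containment reduction, and the membership $c^{\ast}_{k,i}\in{\mathscr C}^{\cal V}_{m}$ are all immediate from the cited results, but one must still justify rigorously that uniform convergence of the rescaled difference scheme $(S_{c^{\ast}_{k,i}})$ yields genuine differentiability of $\phi_0$ with the claimed derivative, while tracking the accumulating $2^{-r}$ factors correctly through the level-dependent operators. This is the non-stationary analogue of the principle that the difference scheme controls the derivative, used implicitly in Proposition~\ref{prop:result3}, and it is the step where care is needed.
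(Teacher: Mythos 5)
Your proposal is correct and follows essentially the same route as the paper: convergence via Proposition~\ref{prop:36} and Theorem~\ref{th:convergence}, then reduction to the common index $j^*$ through the containment \eqref{containment}, followed by an induction that factors out $s_{V_{j^*}}$ and applies Proposition~\ref{prop:result2} to pass to the divided-difference schemes $(S_{c^{\ast}_{i,k}})$, whose limits are the directional derivatives. The step you flag as delicate (that convergence of the non-stationary divided-difference schemes yields differentiability of the limit with the expected derivative) is treated at the same level of detail in the paper, which simply asserts it.
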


\noindent  Proof: The convergence of the non-stationary scheme follows  from the convergence result, Theorem \ref{th:convergence}, in view of (\ref{containment}) and Proposition \ref{prop:result1}.
The proof of the smoothness of the limits  of $(S_{c_k}   :      k\in {\mathbb N}_0) $ is similar to the proof of Proposition \ref{prop:result3}, \costi{it is based on Proposition \ref{prop:result2} and it uses} induction on $j^*$. For $j^*=1$ the claim about smoothness follows from the convergence claim.
 Next, we assume that the claim holds for $j^*-1\ge 1$,
and prove it for $j^*$.
Since $j_k \ge  j^*$ for all $k\in {\mathbb N}_0$,  it follows from (\ref {containment}) that for all $k\in {\mathbb N}_0$ we have $c_k\in {\mathscr C}^{\cal V}_{j^*}$.
Thus,   $c_k(z)={\tilde c}_k(z)  s_{V_{j^*}}(z)$,  with $ {\tilde c}_k \in {\mathscr C}^{\cal V}_{j^*-1}$, for all $k\in {\mathbb N}_0$.
Introducing the symbols $c^*_{i,k}(z)={\tilde c}_k(z)  s_{\tV^{[i]}_{j^*}}(z)$, ,  for $k\in\N_0$ and $i=1,\ldots, d$  with ${\tV}^{[i]}_{j^*}$   the collection of directions in $V_{j^*}$ except $v^{[i]}_{j^*}$, we conclude that the corresponding masks are in 
${\mathscr C}^{\cal V}_{j^*-1}$. Moreover, by Proposition \ref{prop:result2}, the non-stationary scheme $(S_{c^*_{i,k}},\ k\in {\mathbb N})$,
 maps the first divided differences in direction $v_{j^*}^{[i]}$ of the data generated by $(S_{c_k}   :      k\in {\mathbb N}_0)  $ from one  refinement level to its next refinement level. This observation is valid for $v_{j^*}^{[1]},\ldots,v_{j^*}^{[d]}$ the vectors of  $V_{j^*}$. The convergence claim of the theorem implies that the $d$ non-stationary schemes $ (S_{c^*_{i,k}}   :      k\in {\mathbb N}_0),\  i=1,\ldots,d $ 
are converging. By the induction hypothesis these schemes   
generate limits which are in $C_{j^*-2}$
 and therefore the first directional derivatives of the limits of the scheme $ (S_{c_k}   :      k\in {\mathbb N}_0) $ in the directions of  $V_{j^*}$
are $C^{j^*-2}$. Since the $d$ directions in $V_{j^*}$  are linearly independent, the limits of  the scheme $ (S_{c_k}   :      k\in {\mathbb N}_0) $  are  $C^{j^*-1}$.

\begin{Theorem}\label{teo:2Nira}
Let ${\mathcal V}=\{V_j\}_{j=1}^\infty$ be a sequence of basis in ${\mathbb N_0^d}$. 
Let the non-stationary scheme  $(S_{c_k}   :      k\in {\mathbb N}_0) $,  with  $c_k\in {\mathscr C}^{\cal V}_{j_k},\   j_k\in {\mathbb N}$. If for some $j^*, m\in {\mathbb N},\ $
 $j_k\ge j^*$  for all  $k\ge m$, then the non-staionary scheme $(S_{c_k}   :      k\in {\mathbb N}_0)$ generates
 $C^{j^*-1}$ limits.
\end{Theorem}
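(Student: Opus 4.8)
The plan is to reduce the statement to the already-proved Theorem~\ref{Theorem1} by exploiting the fact that the smoothness of a non-stationary limit is a tail property: the first $m$ subdivision operators only alter the data fed into the scheme from level $m$ onwards, and such a finite ``prefix'' cannot lower the smoothness class of the limit. First I would record that the full scheme converges at all. Since each $c_k \in \mathscr{C}_{j_k}^{\mathcal V} \subset \mathscr{C}_1^{\mathcal V} \subset \cA(1,\frac12)$ by Proposition~\ref{prop:36}, Theorem~\ref{th:convergence} yields a continuous basic limit function $\phi_0 = \lim_{r\to\infty} S_{c_r}\cdots S_{c_0}\delta$.

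Next I would isolate the tail scheme $(S_{c_{m+k}} : k \in \N_0)$. Its masks satisfy $c_{m+k} \in \mathscr{C}_{j_{m+k}}^{\mathcal V}$ with $j_{m+k} \ge j^*$ for every $k \ge 0$, so Theorem~\ref{Theorem1}, applied to this scheme with the same sequence of bases $\mathcal V$, shows that it is convergent and generates $C^{j^*-1}$ limits for every initial sequence. I would then set $g^{[m]} := S_{c_{m-1}}\cdots S_{c_0}\delta$, a fixed finitely supported sequence, and use the bookkeeping identity $S_{c_{m+s}}\cdots S_{c_0}\delta = S_{c_{m+s}}\cdots S_{c_m}\, g^{[m]}$, valid for all $s\ge 0$. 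Denoting by $h \in C^{j^*-1}$ the limit produced by the tail scheme from the initial data $g^{[m]}$, I obtain two approximations of the same grid sequence: from the full scheme, $\|\phi_0(2^{-(m+s+1)}\cdot) - S_{c_{m+s}}\cdots S_{c_m}\, g^{[m]}\| \to 0$, and from the tail scheme, $\|h(2^{-(s+1)}\cdot) - S_{c_{m+s}}\cdots S_{c_m}\, g^{[m]}\| \to 0$.

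Subtracting these and evaluating at $\alpha \in \Z^d$, and using $2^{-(m+s+1)}\alpha = 2^{-m}\bigl(2^{-(s+1)}\alpha\bigr)$, I would get $|\phi_0(2^{-m}y) - h(y)| \to 0$ along the dyadic grids $y \in 2^{-(s+1)}\Z^d$. Since these grids become dense as $s\to\infty$ and both $\phi_0$ and $h$ are continuous, this forces the pointwise identity $\phi_0(x) = h(2^m x)$ for all $x \in \R^d$. As dilation preserves the smoothness class and $h \in C^{j^*-1}$, I conclude $\phi_0 \in C^{j^*-1}$, which is the assertion. The one place requiring care — and the main obstacle — is the scaling bookkeeping in this last step: one must match the two dilation rates correctly and justify passing from the grid-point estimates to the global identity $\phi_0(\cdot) = h(2^m\,\cdot)$ via density of the dyadic points together with the continuity of the limit functions. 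Everything else is a direct appeal to Theorem~\ref{Theorem1} and Theorem~\ref{th:convergence}.
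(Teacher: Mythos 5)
Your proposal is correct and follows essentially the same route as the paper: the paper's proof likewise applies Theorem~\ref{Theorem1} to the tail scheme $(S_{c_{m+k}} : k\in\N_0)$ and regards $S_{c_{m-1}}\cdots S_{c_0}f^{[0]}$ as initial data for it. You merely spell out the dilation bookkeeping ($\phi_0(x)=h(2^m x)$ via density of the dyadic grids and continuity) that the paper leaves implicit.
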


\begin{proof}By Theorem \ref{Theorem1}, the non-stationary scheme   $(S_{c_{m+k}}  :      k\in {\mathbb N}_0)$   converges to  $C^{j^*-1}$  limits. Regarding \\ $S_{c_{m-1 }} S_{c_{m-2}}\ldots S_{c_1} S_{c_0} f^0 $ as
 initial data for the scheme $S_{c_m}$,  with $f^0$ an initial data tor the scheme  $(S_{c_k}   :      k\in {\mathbb N}_0) $,  we conclude the claim of the theorem.
\end{proof}

Two direct consequences of Theorem \ref{teo:2Nira} are the following corollaries. \costi{Note that Corollary \ref{coro:3} is the result needed to conclude the smoothness of our  Up-like functions.}

\begin{Corollary}\label{coro:1}
 Let the non-stationary scheme  $ (S_{c_k}   :      k\in {\mathbb N}_0) $  be as in Theorem \ref{teo:2Nira}. If the sequence $\{j_k\}_{k\in {\mathbb N}_0}$
is monotone non-decreasing and unbounded,   then the scheme  $ (S_{c_k}   :      k\in {\mathbb N}_0) $ is converging, and its limits are $C^\infty$.
\end{Corollary}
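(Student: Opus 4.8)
The plan is to derive Corollary~\ref{coro:1} as a direct application of Theorem~\ref{teo:2Nira}. The key observation is that the hypotheses of Corollary~\ref{coro:1}---that the sequence $\{j_k\}_{k\in\N_0}$ is monotone non-decreasing and unbounded---are exactly what is needed to verify, for \emph{every} prescribed smoothness level, the hypothesis ``$j_k \ge j^*$ for all $k \ge m$'' appearing in Theorem~\ref{teo:2Nira}. So the whole argument is a quantifier manipulation: I would fix an arbitrary target smoothness index $j^* \in \N$ and produce the threshold $m$ that Theorem~\ref{teo:2Nira} requires.

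First I would fix an arbitrary $j^* \in \N$. Since $\{j_k\}_{k\in\N_0}$ is unbounded, there exists an index $m \in \N$ with $j_m \ge j^*$. Since the sequence is monotone non-decreasing, this gives $j_k \ge j_m \ge j^*$ for all $k \ge m$. Thus the hypothesis of Theorem~\ref{teo:2Nira} is satisfied for this pair $(j^*, m)$, and the theorem yields that the non-stationary scheme $(S_{c_k} : k\in\N_0)$ converges to limits in $C^{j^*-1}$. (Convergence itself also follows from Theorem~\ref{teo:2Nira}, or already from Theorem~\ref{th:convergence} via Proposition~\ref{prop:36}, since every $c_k \in {\mathscr C}^{\cal V}_{j_k} \subset \cA(1,\tfrac12)$.)

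The final step is to let $j^*$ range over all of $\N$. The limit function of the non-stationary scheme $\phi_0$ does not depend on $j^*$---by Remark~\ref{important} it is the single function $\varphi = \lim_{k\to\infty}\varphi_k$ determined by the scheme. For each $j^*$ the argument above shows this same $\phi_0$ lies in $C^{j^*-1}(\R^d)$. Since $j^*$ is arbitrary, $\phi_0 \in \bigcap_{j^*\in\N} C^{j^*-1}(\R^d) = C^\infty(\R^d)$, which is the claim.

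I do not expect a genuine obstacle here, as the corollary is a clean specialization of Theorem~\ref{teo:2Nira}. The one point requiring a little care---and the closest thing to a subtlety---is the logical order of the quantifiers: the threshold $m$ depends on the chosen smoothness level $j^*$ and generally grows with it, so one must apply Theorem~\ref{teo:2Nira} separately for each $j^*$ rather than seeking a single uniform $m$ (no such uniform $m$ exists precisely because the limit is $C^\infty$ but of no finite uniform order). Emphasizing that $\phi_0$ is a fixed function independent of $j^*$, so that the conclusions $\phi_0 \in C^{j^*-1}$ can be intersected over all $j^*$, is what cleanly closes the argument.
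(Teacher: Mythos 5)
Your proof is correct and follows exactly the route the paper intends: the paper states Corollary~\ref{coro:1} as a ``direct consequence'' of Theorem~\ref{teo:2Nira} without writing out the argument, and your quantifier bookkeeping (for each $j^*$, unboundedness plus monotonicity yields the threshold $m$, then intersect the conclusions $C^{j^*-1}$ over all $j^*$ for the single fixed limit function) is precisely the intended filling-in of that gap.
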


\begin{Corollary}\label{coro:3}
Let $D_0$ be a collection  of directions in ${\mathbb N}_0^d$, containing a basis $V$  of  ${\mathbb R}^d$,  and  define $a(z)=2^ds_{D_0}(z)$. If  $S_a$ is converging, then the symbols
$c_{kr+j}(z)=2^{-dk}a(z)^{k+1},\  j=0,\ldots,r-1$ for $k \in \N_0$ and some  $r\in {\mathbb N}$, define a converging non-stationary scheme $(S_{c_\ell}   :      \ell\in {\mathbb N}_0) $, generating    $C^\infty$  limits.
\end{Corollary}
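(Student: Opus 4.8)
The plan is to show that the masks $c_{kr+j}(z)=2^{-dk}a(z)^{k+1}$ fall into the framework of Corollary~\ref{coro:1}, whose hypotheses we must verify. The essential idea is that each power of $a(z)=2^d s_{D_0}(z)$ injects a full smoothing factor (recall $D_0$ contains a basis $V$ of $\R^d$), so raising $a$ to higher powers increases the number of available smoothing factors without bound. Concretely, I would first check that each $S_{c_\ell}$ is a well-defined convergent stationary scheme. Since $a(z)=2^d s_{D_0}(z)$ corresponds to a positive mask with $S_a$ converging by assumption, the normalization $2^{-dk}$ is exactly what is needed so that $c_{kr+j}(z)=2^{-dk}a(z)^{k+1}$ satisfies the partition-of-unity conditions \eqref{5}; indeed at $z=\bone$ one has $a(\bone)=2^d$, so $c_{kr+j}(\bone)=2^{-dk}(2^d)^{k+1}=2^d$, which is the correct normalization for a mask in dimension $d$.

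Next I would identify the membership $c_\ell\in{\mathscr C}^{\cal V}_{j_\ell}$ for an appropriate sequence ${\mathcal V}=\{V_j\}_{j=1}^\infty$ of bases and an appropriate exponent sequence $\{j_\ell\}$. Writing $a(z)=2^d s_{D_0}(z)$, we have $a(z)^{k+1}=2^{d(k+1)} s_{D_0}(z)^{k+1}$, so $c_{kr+j}(z)=2^{-dk}\cdot 2^{d(k+1)} s_{D_0}(z)^{k+1}=2^d s_{D_0}(z)^{k+1}$. Since $D_0$ contains a basis $V$ of $\R^d$, each factor $s_{D_0}(z)$ contains one full smoothing factor $s_V(z)$ together with a leftover positive factor; hence $s_{D_0}(z)^{k+1}$ contains $k+1$ full smoothing factors in the directions of $V$. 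Taking ${\mathcal V}$ to be the constant sequence $V_j=V$ for all $j$, the symbol $c_{kr+j}(z)$ is a product of a positive convergent symbol, some $s_{D_c}$, and $(k+1)$ copies of $s_V(z)$, which places $c_{kr+j}\in{\mathscr C}^{\cal V}_{k+1}$. Thus $j_{kr+j}=k+1$ for $j=0,\ldots,r-1$.

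With this indexing the exponent sequence $\{j_\ell\}_{\ell\in\N_0}$ takes the value $k+1$ on the block $\ell\in\{kr,\ldots,kr+r-1\}$, so it is monotone non-decreasing and unbounded. Corollary~\ref{coro:1} then applies directly and yields both the convergence of $(S_{c_\ell}:\ell\in\N_0)$ and the $C^\infty$ smoothness of its limits, completing the proof.

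The step I expect to require the most care is the verification that $s_{D_0}(z)^{k+1}$ genuinely contributes $k+1$ full smoothing factors rather than merely $k+1$ directional factors, i.e. that the presence of a basis $V\subseteq D_0$ lets us extract one complete $s_V(z)$ per power. This is where the hypothesis ``$D_0$ containing a basis $V$'' is used crucially: each power $s_{D_0}$ supplies the $d$ directional factors forming one $s_V$, and the remaining directional factors in $D_0\setminus V$ are absorbed into the positive convergent part of the symbol, consistent with Definition~\ref{def:Nirasymbols1} for ${\mathscr C_0}$. Once this bookkeeping is settled, the remaining invocation of Corollary~\ref{coro:1} is immediate.
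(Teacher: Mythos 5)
Your proposal follows essentially the same route as the paper's proof: identify each $c_\ell$ as a member of ${\mathscr C}^{\cal V}_{j_\ell}$ for ${\cal V}$ the constant sequence of the basis $V$, check that $\{j_\ell\}$ is monotone non-decreasing and unbounded, and invoke Corollary~\ref{coro:1}. The normalization check and the reduction $c_{kr+j}(z)=2^d s_{D_0}(z)^{k+1}=a(z)\,s_{D_0}(z)^{k}$ are fine.

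There is, however, one step that does not hold as stated: the membership claim $c_{kr+j}\in{\mathscr C}^{\cal V}_{k+1}$. To extract $k+1$ full factors $s_V(z)$ you must exhibit the leftover $2^d\,s_{D_0\setminus V}(z)^{k+1}$ as an element of ${\mathscr C}_0$, i.e.\ as a \emph{positive symbol of a converging stationary scheme} times some $s_{D_c}(z)$ (Definition~\ref{def:Nirasymbols1}). Nothing in the hypotheses guarantees that such a convergent factor survives once all $k+1$ copies of $s_V$ have been stripped out of $a(z)^{k+1}$: in the extreme case $D_0=V$ the leftover is the constant $2^d$, whose mask $2^d\delta$ does not define a converging scheme, so already $c_0(z)=2^d s_V(z)$ fails to lie in ${\mathscr C}^{\cal V}_{1}$ under the paper's definitions. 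The paper sidesteps this by keeping one full copy of $a(z)$ intact as the ${\mathscr C}_0$ base (this is precisely where the hypothesis that $S_a$ converges is used) and counting only the $\lfloor \ell/r\rfloor$ additional repetitions of $V$ supplied by the extra powers, which gives $c_\ell\in{\mathscr C}^{\cal V}_{\lfloor \ell/r\rfloor}$. Since $\lfloor \ell/r\rfloor$ is still monotone non-decreasing and unbounded, Corollary~\ref{coro:1} applies exactly as you intend and the conclusion is unaffected; the repair is simply to lower your index by one and not to break the factor $a(z)$ apart.
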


\begin{proof}
Since $S_a$  is a converging scheme, and since $a(z)$ is a symbol with positive coefficients, then  
$a\in {\mathscr C}_0$. Let  $D_\ell$ denote the collection of directions corresponding to $ c_\ell(z)$.  By definition, $D_\ell$ contains at least $[\ell/r]$ repetitions of $V$, and the same number of  repetitions
of the directions in $D_0\setminus\{V\}$. Therefore, for ${\cal V}$ the constant sequence  of the basis $V$, 
we have that 
$c_\ell\in {\mathscr C}^{\cal V}_{[\ell/r]}$  for $\ell\in {\mathbb N}_0$.
 Thus, by Corollary \ref{coro:1}, the scheme $ (S_{c_\ell}   :      \ell\in {\mathbb N}_0) $  is converging to  $C^\infty$  limits.
\end{proof}

\section{The support of the Up-like functions} \label{sec:compact support}

\noindent In this section, we first state an assumption on the supports of the masks $a_k,\ k\in \N_0$ which ensures the compact
support of $\phi_0$ in \eqref{def:phim}.  Then, we compute the support of special 
non-stationary schemes based on spline masks and three directional box-spline masks, for the univariate and bivariate case, respectively. 
In our analysis we need a Lemma whose proof can be found in \cite{NIRANEW}. 

\begin{Lemma}\label{Lemma:IIINira}
Let $H$ be a compact, convex set in a Euclidean space. Then
$$\alpha H\, \Msum\, \beta H=(\alpha+\beta)\, H$$
for $\alpha, \beta$ positive reals.
\end{Lemma}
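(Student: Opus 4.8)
The plan is to prove Lemma~\ref{Lemma:IIINira} by the standard two-sided set inclusion argument, establishing $\alpha H \Msum \beta H \subseteq (\alpha+\beta)H$ and $(\alpha+\beta)H \subseteq \alpha H \Msum \beta H$ separately. The easy inclusion is the second one: given any point $(\alpha+\beta)h$ with $h\in H$, I simply write it as $\alpha h + \beta h$, and since $\alpha h \in \alpha H$ and $\beta h \in \beta H$, this point lies in the Minkowski sum by definition. Crucially, this direction requires no hypothesis on $H$ at all, and in particular does not use convexity.

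First I would handle the nontrivial inclusion $\alpha H \Msum \beta H \subseteq (\alpha+\beta)H$, which is exactly where convexity enters. Take an arbitrary element of the Minkowski sum, of the form $\alpha h_1 + \beta h_2$ with $h_1, h_2 \in H$. The goal is to exhibit a single point $h \in H$ with $\alpha h_1 + \beta h_2 = (\alpha+\beta)h$, i.e.
\[
 h = \frac{\alpha}{\alpha+\beta}\, h_1 + \frac{\beta}{\alpha+\beta}\, h_2.
\]
Since $\alpha, \beta$ are positive reals, the coefficients $\tfrac{\alpha}{\alpha+\beta}$ and $\tfrac{\beta}{\alpha+\beta}$ are nonnegative and sum to $1$, so $h$ is a convex combination of $h_1$ and $h_2$. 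By convexity of $H$, this forces $h \in H$, and therefore $\alpha h_1 + \beta h_2 = (\alpha+\beta)h \in (\alpha+\beta)H$, completing the inclusion.

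Combining the two inclusions yields the claimed equality $\alpha H \Msum \beta H = (\alpha+\beta)H$. The main (and essentially only) obstacle worth flagging is the role of the hypotheses: positivity of $\alpha$ and $\beta$ is needed to guarantee $\alpha+\beta > 0$ so that the normalized coefficients are well defined, and convexity of $H$ is needed precisely to conclude that the convex combination $h$ stays inside $H$; the compactness stated in the hypothesis is not actually used in the proof of the identity itself and is presumably retained only to match the setting of Definition~\ref{def:support}. I would note in passing that the result fails for nonconvex $H$ (a two-point set already gives a strict inclusion), which is what makes the convexity of $\supp(a)$ the essential structural property being exploited here.
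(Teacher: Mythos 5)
Your proof is correct and complete: the nontrivial inclusion $\alpha H \Msum \beta H \subseteq (\alpha+\beta)H$ via the convex combination $\frac{\alpha}{\alpha+\beta}h_1+\frac{\beta}{\alpha+\beta}h_2$ is exactly the standard argument, and your remarks about which hypotheses are actually used (convexity essential, compactness not, positivity only to keep $\alpha+\beta>0$) are accurate. The paper itself gives no proof of this lemma, deferring instead to Rockafellar's \emph{Convex Analysis}, where the same convex-combination argument is the one given, so there is nothing to contrast.
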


\noindent In the stationary case when $a_k=a$, $k \in \N_0$, following the subdivision process \eqref{def:sub} we see that, due to Lemma  \ref{Lemma:IIINira}, the support of the basic limit function is the set
\begin{equation*} 
\displaystyle{ {\Msum_{k=0}^\infty} 2^{-(k+1)} \supp(a)}=\supp(a)\displaystyle{ \sum_{k=0}^\infty 2^{-(k+1)}} \, =\supp(a)\,,
 \end{equation*}
  where $\Msum$ denotes the Minkowski sum of sets  (see Section \ref{BACK}).
 
\medskip  In the non-stationary case, to ensure the compact support of $\phi_0$ and to be able to compute the support of our Up-like functions,  we formulate the following assumption on the masks.

\medskip 
\noindent {\bf Assumption S:}  The sequence of masks $(a_k \, :\, k\in \N_0)$ is such that
\begin{description}
\item[$(i)$]  $\supp(a_{k})=\lambda_k \,\supp(a_0)$, $k \in \N_0,$ with $\lambda_k\in \R$, and $\lambda_0=1$;
\item[$(ii)$]  $\lambda_{k+1}\ge \lambda_k,\, k\in \N_0$;
\item[$(iii)$]  $\displaystyle \sum_{k=0}^\infty 2^{-(k+1)}\lambda_k<\infty$;

\end{description}

\begin{Remark} The conditions of {\bf Assumption S} generalize the univariate case (see e.g. \cite{CohenDyn}), where they read as
\begin{description}
\item[$(I)$] $\supp a_k=[0, N_k], \ N_k \in \N$,
\item[$(II)$] $N_k\le N_{k+1}$, $k \in \N_0$, and 
\item[$(III)$] $\displaystyle \sum_{k=0}^\infty 2^{-(k+1)} N_k < \infty$. 
\end{description}
\end{Remark}

\begin{Theorem}\label{teo:support}
Let $(a_k\,:\, k\in \N_0)$ satisfy {\bf Assumption S}. Then, the set
\begin{equation} \label{def:Sigma}
\cSigma=\displaystyle{ {\Msum_{k=0}^\infty} 2^{-(k+1)} \supp(a_k)}\,,
\end{equation} 
is a compact, convex set containing 
the support of $\phi_0=\lim_{k \rightarrow \infty} S_{a_k} S_{a_{k-1}} \ldots S_{a_0}$.
Moreover, if all masks $a_k$ have non-negative entries and if each mask define a converging stationary scheme,  then $\suppf(\phi_0)= \cSigma$.
\end{Theorem}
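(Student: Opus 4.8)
The plan is to establish the two assertions of Theorem~\ref{teo:support} separately: first that $\cSigma$ is compact and convex and contains $\suppf(\phi_0)$, and second that under the extra positivity/convergence hypotheses the containment is in fact an equality. For the first part I would argue that $\cSigma$ is well-defined as a set: by {\bf Assumption S}$(i)$ each $\supp(a_k)=\lambda_k\,\supp(a_0)$ is compact and convex (Definition~\ref{def:support}), so $2^{-(k+1)}\supp(a_k)=2^{-(k+1)}\lambda_k\,\supp(a_0)$ is too, and by Lemma~\ref{Lemma:IIINira} the partial Minkowski sums $\Msum_{k=0}^{m}2^{-(k+1)}\supp(a_k)=\left(\sum_{k=0}^{m}2^{-(k+1)}\lambda_k\right)\supp(a_0)$ are rescalings of the single set $\supp(a_0)$. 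Condition $(iii)$ guarantees that $\Lambda:=\sum_{k=0}^{\infty}2^{-(k+1)}\lambda_k<\infty$, so the partial sums increase (by $(ii)$, each $\lambda_k\ge 0$) and converge, and hence $\cSigma=\Lambda\,\supp(a_0)$ is again a compact convex rescaling of $\supp(a_0)$.

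For the containment $\suppf(\phi_0)\subseteq\cSigma$, I would track how the support propagates through the subdivision recursion $f^{[r]}=S_{a_{r-1}}\cdots S_{a_0}\delta$. Starting from $\delta$ supported at the origin, one step of $S_{a_k}$ spreads the support of the current sequence by (a $2^{-(k+1)}$-scaled copy of) $\supp(a_k)$, when one reads the iterates at the correct dyadic scale $2^{-(k+1)}$ as in Definition~\ref{def:convergence}. Formalizing this, the support of $f^{[r]}$ at level $r$, rescaled by $2^{-r}$, is contained in $\Msum_{k=0}^{r-1}2^{-(k+1)}\supp(a_k)$, which by the previous paragraph equals $\left(\sum_{k=0}^{r-1}2^{-(k+1)}\lambda_k\right)\supp(a_0)\subseteq\cSigma$. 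Passing to the limit $r\to\infty$ and using the uniform convergence of the rescaled iterates to $\phi_0$ established in Theorem~\ref{th:convergence}, together with the fact that $\cSigma$ is closed, I would conclude that $\phi_0$ vanishes outside $\cSigma$, i.e.\ $\suppf(\phi_0)\subseteq\cSigma$.

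For the reverse inclusion under the positivity hypotheses, the idea is that nonnegativity prevents cancellation, so no point of $\cSigma$ can be "lost" from the support. Here I would use that each $a_k$ has nonnegative entries and defines a convergent stationary scheme, so the mask sums are $1$ by \eqref{5} and $\|S_{a_k}\|=1$. Because there is no sign cancellation, the value $\phi_0$ at (a dense set of dyadic) points in the interior of $\cSigma$ is a sum of strictly positive contributions, hence positive; more precisely, every vertex/extreme point of each scaled support $2^{-(k+1)}\supp(a_k)$ corresponds to a nonzero (positive) mask coefficient, and the Minkowski sum of these extreme points realizes the extreme points of $\cSigma$, which therefore must lie in $\clos(\suppf(\phi_0))$. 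Combined with convexity and the first inclusion, this forces $\suppf(\phi_0)=\cSigma$.

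I expect the main obstacle to be the reverse inclusion rather than the containment. The containment is essentially a bookkeeping argument on how supports accumulate under the Minkowski sum, made clean by Lemma~\ref{Lemma:IIINira}. The equality, however, requires genuinely using nonnegativity to rule out interior zeros of $\phi_0$; the delicate point is to show not merely that $\phi_0$ is nonnegative but that its support fills all of $\cSigma$, which means arguing that the limit does not degenerate to a smaller set despite the growing masks. The cleanest route is likely to show that the extreme points of $\cSigma$ are attained by products of the extremal (corner) mask coefficients, which are strictly positive, so that $\phi_0$ is strictly positive in a neighborhood of each such corner point, and then invoke convexity of $\cSigma$ together with the already-proven inclusion $\suppf(\phi_0)\subseteq\cSigma$ to upgrade to equality.
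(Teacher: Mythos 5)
Your proposal follows essentially the same route as the paper: the inclusion $\suppf(\phi_0)\subseteq\cSigma$ via support propagation under the dyadic re-parametrization, Lemma~\ref{Lemma:IIINira} together with {\bf Assumption S} to identify $\cSigma$ as the compact convex set $\bigl(\sum_{k\ge 0}2^{-(k+1)}\lambda_k\bigr)\,\supp(a_0)$, and the reverse inclusion from the non-negativity of the masks and \eqref{5}, which rule out cancellations. One small caution: your suggested ``cleanest route'' of verifying positivity only near the extreme points of $\cSigma$ and then invoking convexity is not by itself conclusive, since the support of a function need not be convex; the argument you state first (strictly positive contributions at a dense set of dyadic points throughout the interior of $\cSigma$) is the one that actually closes the equality, and is what the paper relies on.
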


\begin{proof} 
A straightforward consequence of the subdivision process in \eqref{def:sub}, and the dyadic re-parametrization of the subdivision sequences by the appropriate powers of $2$, is the observation that the complement of $\cSigma$ in $\R^d$ is a set of zeros for any sequence of masks. Thus, $\suppf(\phi_0)\subseteq \cSigma$. {\bf Assumption S}, and Lemma  \ref{Lemma:IIINira} lead to
$$
\suppf(\phi_0)\subseteq \cSigma=\displaystyle{ \Msum_{k=0}^\infty 2^{-(k+1)} \supp(a_k)}
=\supp(a_0)\displaystyle{ \sum_{k=0}^\infty 2^{-(k+1)} \lambda_k}\,.
$$
 Therefore, in view of Definition \ref{def:support}, $\cSigma$ is a compact, convex set containing  $\suppf(\phi_0)$. If all the masks'  entries are non negative and if each mask defines a converging stationary scheme, then 
 there are no cancellations in the Minkoski sum in \eqref{def:Sigma} and in view of \eqref{5} all points in $\cSigma$  are non-zero, and therefore $\suppf(\phi_0)=\cSigma$.
 \end{proof}

\section{Examples}\label{sec:examples}

In this section examples  of univariate and bivariate Up-like functions are given.
\subsection{Univariate case}

\vspace{0.3cm} This subsection shows that in the univariate case there exist non-stationary subdivision schemes with $C^\infty $
basic limit functions supported on  $ [0,1+\epsilon]$, with positive $\epsilon$, arbitrarily small. \costi{We know that, to get
$C^\infty$ limits we need a growing number of the factor $\frac{1+z}{2}$  in the symbols $a_0 (z), a_1 (z), a_2
(z),\ldots $, but to decrease the support size of the basic limit function, we have to slowly increase the number of
this factor.}

Proposition \ref{prop:Nirasupport} considers \costi{the case where $r$ repetitions of the same number of smoothing factors are taken before increasing their number. For this case, we explicitly compute the support size of the corresponding basic limit function}.
Before, for completeness, we state and prove two auxiliary lemmas.

\begin{Lemma}\label{Lemma:INira}
Let $f \in \ell_\infty(\Z)$ be a sequence of positive numbers with
$supp(f)=\{0,1,2,\ldots,u\}\ , u\in \mathbb N$, and let $a(z)=\frac{(1+z)^{m+1}}{2^m}$, with $m\in \N_0$. Then
$$supp(S_a f)=\{0,1,2,\ldots, (2u+m+1)\}\  .$$
\end{Lemma}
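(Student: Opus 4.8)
The plan is to compute $(S_a f)(\alpha)$ entry-wise and to determine exactly which indices $\alpha$ give a nonzero value, the whole argument resting on the strict positivity of both $f$ and the mask so that no cancellation can occur.

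First I would read off the mask from its symbol: since $a(z)=\frac{(1+z)^{m+1}}{2^m}=\frac{1}{2^m}\sum_{\gamma=0}^{m+1}\binom{m+1}{\gamma}z^\gamma$, the mask coefficients are $a(\gamma)=\frac{1}{2^m}\binom{m+1}{\gamma}$ for $\gamma\in\{0,1,\ldots,m+1\}$ and zero otherwise. In particular every coefficient on the support $\{0,\ldots,m+1\}$ is strictly positive. Then, using the definition \eqref{def:sub} in dimension $d=1$, I would write $(S_a f)(\alpha)=\sum_{\beta\in\Z}a(\alpha-2\beta)f(\beta)=\sum_{\beta=0}^{u}a(\alpha-2\beta)f(\beta)$, the sum being truncated because $f$ is supported on $\{0,\ldots,u\}$.

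The crucial point is positivity: since $f(\beta)>0$ for $\beta\in\{0,\ldots,u\}$ and $a(\gamma)>0$ exactly on $\{0,\ldots,m+1\}$, every summand is non-negative, so $(S_a f)(\alpha)\ne 0$ (indeed $>0$) if and only if at least one summand is nonzero, that is, if and only if there exists $\beta\in\{0,\ldots,u\}$ with $\alpha-2\beta\in\{0,\ldots,m+1\}$. This identifies the support with the union of index-blocks $\bigcup_{\beta=0}^{u}\{2\beta,2\beta+1,\ldots,2\beta+m+1\}$. Finally I would evaluate this union: its smallest element is $0$ (at $\beta=0$) and its largest is $2u+m+1$ (at $\beta=u$), so it is contained in $\{0,1,\ldots,2u+m+1\}$. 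For equality I would verify that consecutive blocks leave no gap, namely that the block for $\beta$ ends at $2\beta+m+1$ while the block for $\beta+1$ starts at $2\beta+2$, and $2\beta+2\le(2\beta+m+1)+1$ holds for every $m\ge0$; hence the blocks overlap (for $m\ge1$) or abut (for $m=0$) and cover every integer in the range. This yields $supp(S_a f)=\{0,1,\ldots,2u+m+1\}$.

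The only genuine subtlety lies in two elementary but essential points: the no-cancellation argument, which is precisely where the positivity hypotheses on $f$ and on the mask are used, and the gap-free covering of the index range, which is where the step size $2$ versus the block length $m+2$ enters. The bookkeeping of minima, maxima, and block adjacency is routine once these two observations are in place.
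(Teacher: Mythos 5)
Your proof is correct and follows essentially the same route as the paper's: both identify the support of $S_a f$ as the union of the blocks $\{2\beta,\ldots,2\beta+m+1\}$ over $\beta\in\{0,\ldots,u\}$ and use positivity of $f$ and of the binomial mask coefficients to rule out cancellation. You are slightly more explicit than the paper about the no-cancellation step and about verifying that consecutive blocks leave no gaps, which is a welcome bit of extra care but not a different argument.
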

\begin{proof}
First we observe that
\begin{equation}
\label {basic}
supp(S_a \delta)=\{0,1,2,\ldots,(m+1)\}\subset \mathbb Z\ .
\end{equation}
By the linearity of $S_a$ we obtain $S_a f=S_a\sum_{i=0}^u f(i)\delta(i-\cdot)=\sum_{i=0}^u f(i)
(S_a\delta)(i-\cdot)$.
In view of (\ref{basic}) we get that $ supp(S_a \delta(i-\cdot)=\{2i,2i+1,\ldots,2i+m+1\}$, and thus
$$ supp((S_a f))= \displaystyle \cup_{i=0}^u \{0,1,2,\ldots,2i+m+1\}=\{0,1,2,\ldots,(2u+m+1)\} ,$$
which proves the claim of the lemma.
\end{proof}


The second Lemma is more algebraic. 

\begin{Lemma}\label{Lemma:IINira}
For $q\in (0,1)$ we have $\displaystyle{\sum_{j=1}^\infty jq^j=\frac{q}{(1-q)^2}\ ,\quad \hbox{and}\quad  \ \ \sum_{j=0}^\infty (j+1)q^j=\frac{1}{(1-q)^2}}\ .$
\end{Lemma}
\begin{proof}
The first equality above follows easily from the second one. So we only prove the second equality. Since $0<q<1$,
we have
$$\sum_{j=0}^\infty (j+1)q^j=\sum_{j=0}^\infty\frac{d}{dq} q^{j+1}=\frac{d}{dq}\sum_{j=1}^\infty q^j
=\frac{d}{dq}\left(\frac{q}{1-q}\right)=\frac{1}{(1-q)^2}\ .$$
\end{proof}

\begin{Proposition} \label{prop:Nirasupport} Let $r\in \N$, and let
\begin{equation}
\label{21'}
 a_{mr}(z)=a_{mr+1}(z)=a_{mr+2}(z)=\ldots=a_{mr+r-1}(z)=\frac{(1+z)^{m+1}}{2^m},\qquad m \in \N_0 .
\end{equation}
Then, for the limit function \costi{\begin{equation}\label{blf}\phi_{0,r}=\lim_{k\rightarrow\infty}S_{a_k}S_{a_{k-1}}\ldots S_{a_0} \delta,\end{equation}} we have that 
$$
\phi_{0,r}\ \hbox{is}\  C^\infty,\quad \hbox{and}\quad \suppf(\phi_{0,r})=\left[0, 1+\frac{1}{2^r-1}\right]\,.
 $$
\end{Proposition}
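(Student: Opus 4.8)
The plan is to prove the two assertions separately: first the $C^\infty$ regularity, using the smoothing-factor machinery of Section~\ref{sec:regularity_up}, and then the exact support, using the support-tracking Lemma~\ref{Lemma:INira} together with the summation Lemma~\ref{Lemma:IINira}. Throughout I would work with $m_k=\lfloor k/r\rfloor$, so that $a_k(z)=\frac{(1+z)^{m_k+1}}{2^{m_k}}$, and I would take $\mathcal V$ to be the constant sequence of the one-element basis $V_j=\{1\}$, for which $s_{V_j}(z)=\frac{1+z}{2}$.

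\textbf{Regularity.} I would first rewrite $a_{mr}(z)=\frac{(1+z)^{m+1}}{2^m}=2\bigl(\tfrac{1+z}{2}\bigr)^{m+1}$ and, for $m\ge 1$, factor it as $\frac{(1+z)^2}{2}\cdot\bigl(\tfrac{1+z}{2}\bigr)^{m-1}$. Since the hat symbol $\frac{(1+z)^2}{2}$ is a positive symbol of a converging stationary scheme, it lies in ${\mathscr C}_0$, and hence $a_{mr}\in{\mathscr C}^{\cal V}_{m-1}$ by Definition~\ref{def:Nirasymbols2}. The exceptional block is $m=0$, where $a_k(z)=1+z$ is the (non-converging) ``hold'' scheme, which lies in no ${\mathscr C}^{\cal V}_j$; I would therefore regard $f^{[0]}:=S_{a_{2r-1}}\cdots S_{a_0}\delta\in\ell_\infty(\Z)$ as bounded initial data and analyze the tail scheme $(S_{a_{2r+k}} : k\in\N_0)$. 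For this tail all masks satisfy $a_{2r+k}\in{\mathscr C}^{\cal V}_{j_k}$ with $j_k=\lfloor(2r+k)/r\rfloor-1\ge 1$, and the sequence $\{j_k\}$ is monotone non-decreasing and unbounded. Corollary~\ref{coro:1} then gives convergence of the tail scheme to $C^\infty$ limits for every initial datum, in particular for $f^{[0]}$; since $\phi_{0,r}$ is a fixed dyadic dilate of that limit, it is itself $C^\infty$ and the full scheme converges.

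\textbf{Support.} Here I would track integer supports directly. Starting from $\suppf(\delta)=\{0\}$, the base case \eqref{basic} gives $\suppf(S_{a_0}\delta)=\{0,\ldots,m_0+1\}$, and for $k\ge 1$ Lemma~\ref{Lemma:INira} (applicable since all masks are positive and each intermediate sequence is positive on a full integer interval) yields the recurrence $u_k=2u_{k-1}+m_k+1$ for the right endpoint $u_k$ of $\suppf\bigl(S_{a_k}\cdots S_{a_0}\delta\bigr)=\{0,\ldots,u_k\}$, with $u_{-1}=0$. Passing to the dyadically rescaled quantity $v_k:=2^{-(k+1)}u_k$ turns this into the telescoping relation $v_k=v_{k-1}+2^{-(k+1)}(m_k+1)$, so that the physical right endpoint is $v_k=\sum_{j=0}^{k}2^{-(j+1)}(\lfloor j/r\rfloor+1)$, which increases to its limit. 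Writing $j=mr+s$ with $0\le s\le r-1$, summing the inner geometric series $\sum_{s=0}^{r-1}2^{-s}=2(1-2^{-r})$, and then applying Lemma~\ref{Lemma:IINira} with $q=2^{-r}$, I obtain
\[
\sum_{j=0}^{\infty}2^{-(j+1)}(\lfloor j/r\rfloor+1)
=(1-2^{-r})\sum_{m=0}^{\infty}(m+1)\,(2^{-r})^{m}
=\frac{1}{1-2^{-r}}=1+\frac{1}{2^{r}-1}.
\]
Because the masks are non-negative there are no cancellations in this growth, so Lemma~\ref{Lemma:INira} gives not merely containment but equality of the supports, and hence $\suppf(\phi_{0,r})=\bigl[0,\,1+\tfrac{1}{2^r-1}\bigr]$.

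\textbf{Main obstacle.} The only genuinely delicate point is the first block $m=0$, whose mask $1+z$ is neither in ${\mathscr A}(1,\rho)$ nor in any ${\mathscr C}^{\cal V}_j$; consequently neither Theorem~\ref{th:convergence} nor the regularity theorems apply to the full scheme verbatim, and likewise the \emph{equality} clause of Theorem~\ref{teo:support} (which requires each mask to define a converging stationary scheme) is unavailable. My reduction to the tail scheme disposes of this for smoothness and convergence, while the self-contained support count through Lemma~\ref{Lemma:INira}, which needs only positivity and not convergence, delivers the exact support. The remaining bookkeeping is purely routine: verifying the dyadic-rescaling identification $\phi_{0,r}(t)=\psi(2^{2r}t)$ for the tail limit $\psi$, and checking that each intermediate sequence is strictly positive on its integer support so that Lemma~\ref{Lemma:INira} may be iterated.
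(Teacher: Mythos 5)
Your proof is correct and follows essentially the same route as the paper: the support is obtained from Lemma \ref{Lemma:INira} and Lemma \ref{Lemma:IINira} (your telescoping recurrence for $v_k$ is exactly the paper's block-grouped infinite sum for $u_r$), and the smoothness from membership of the masks in ${\mathscr C}^{\cal V}_j$ with $j\to\infty$ together with Corollary \ref{coro:1}. The only place you depart is in explicitly isolating the initial block with symbol $1+z$, which defines no convergent stationary scheme and lies in no ${\mathscr C}^{\cal V}_j$, and restarting from the tail with rescaled data; the paper instead asserts $a_m\in{\mathscr C}^{\cal V}_{[m/r]}$ and cites Corollary \ref{coro:3} without this adjustment, so your treatment of that point is the more careful one.
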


\begin{proof}
Note that, for support consideration,  any subdivision sequence $f^{[k]},\ k\in\N_0$ which is defined on $\Z$, 
should be considered as defined on $2^{-k}\Z$ requiring a the reparametrization $2^{-k} f^{[k]}\subset 2^{-k}\mathbb Z$.
By Lemma \ref{Lemma:INira} and in view of \eqref {21'}
$ supp(\phi_{0,r})=[0,u_r]$ with
$$u_r=1\left(\frac{1}{2}+\frac{1}{4}+\ldots+\frac
{1}{2^{r}}\right)+2\left(\frac{1}{2^{r+1}}+\frac{1}{2^{r+2}}+\ldots+\frac{1}{2^{2r}}\right)+3\left(\frac{1}{2^{2r+1}}+\frac{1}{2^{2r+2}}+\ldots+
\frac {1}{2^{3r}}\right)+\ldots\ .$$
Thus
$$u_r=\sum_{j=0}^\infty (j+1)2^{-rj}\left(\frac{1}{2}+\frac{1}{4}+\ldots+\frac {1}{2^r}\right)=\sum_{j=0}^\infty
(j+1)2^{-rj}\sum_{\ell=1}^r 2^{-\ell} .$$
In view of Lemma \ref{Lemma:IINira} we finally get
$$u_r=\frac{2^r-1}{2^r}\frac{1}{(1-2^{-r})^2}=\frac{2^r}{2^r-1}=1+\frac{1}{2^r-1}\ .$$
Since the masks $a_{m}\in {\mathscr C}_{[m/r]}^{\cal V}$ with ${\cal V}=\{1,1, 1,...\}$, we conclude based on Corollary \ref{coro:3}
that $\phi_{0,r}$, defined in \eqref{blf}, is $C^{\infty}$.
This completes the proof of the proposition. 
\end{proof}

Based on the previous results, we can construct the $C^\infty$ compactly supported Up-function supported in $[0,2]$, 
as well as other $C^\infty$  functions with a smaller support than the support of the Up-function that are a new type of Up-like functions.

\begin{Example}[Up-function]\label{example:UP}
We consider the case $r=1$ that is the symbols
 $$a_{m}(z)=\frac{(1+z)^{m+1}}{2^m},\ m\ge 0.
 $$ 
Based on Proposition \ref{prop:Nirasupport}, the  basic limit function  $\phi_{0,1}=\lim_{k\rightarrow\infty}S_{a_k}S_{a_{k-1}}\ldots S_{a_0} \delta$ is $C^\infty$ and has support
$\left[0, 2\right]$.
\end{Example}

\begin{Example}[The Up-like function $\phi_{0,2}$]\label{example:NIP}
We start by considering the case $r=2$ where 
 $$a_{2m}(z)=a_{2m+1}(z)=\frac{(1+z)^{m+1}}{2^m},\ m\ge 0.
 $$ 
Based on Proposition \ref{prop:Nirasupport}, the basic limit function  $\phi_{0,2}=\lim_{k\rightarrow\infty}S_{a_k}S_{a_{k-1}}\ldots S_{a_0} \delta$ is $C^{\infty}$ with support
$\left[0, \frac{4}{3}\right]$.
\end{Example}

\subsection{Bivariate case}
In this subsection we discuss bivariate examples which  are a natural generalization of the previous univariate example.
For  a given 
$ r\in \mathbb {N}$ we consider the  non-stationary subdivision scheme with  masks of the form
\begin{equation}
\label{masks00}
 a_{mr}=a_{mr+1}=\ldots=a_{(m+1)r-1}=s_{m+1}(e^{[1]},  e^{[2]}, e^{[3]}),\  \  m\in\mathbb{N}_0\ , 
\end{equation}
where $e^{[1]}=(1,0),\ e^{[2]}=(0,1),\ e^{[3]}=(1,1)$ and $s_m(e^{[1]} ,e^{[2]},e^{[3]})$ denotes the mask  of the three-directional box-spline
corresponding to the directions $e^{[i]}\ i=1,2,3$, each repeated $m$ times. In case $m=1$  we omit the $m$, and denote the corresponding mask by  $s(e^{[1]}, e^{[2]}, e^{[3]})$. 
For this non-stationary scheme a support result can be proven based on Theorem \ref{teo:support} and two lemmas.
The first  is Lemma \ref{Lemma:IIINira}
and the second is (see \cite[Chaper 1]{Box}).
\begin{Lemma}\label{Lemma:IVNira}
For $m\in \mathbb{N}$, $\supp(s_ m( e^{[1]}, e^{[2]}, e^{[3]}))$ is a convex set satisfying
 $$\supp(s_m( e^{[1]}, e^{[2]}, e^{[3]}))=m\cdot \supp(s(e^{[1]}, e^{[2]}, e^{[3]}))\ .$$
\end{Lemma}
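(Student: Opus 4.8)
The plan is to work directly with the mask symbol. First I would recall from \cite[Chapter 1]{Box} that the mask of the three-directional box-spline $s_m(e^{[1]}, e^{[2]}, e^{[3]})$ has a symbol equal, up to a positive normalizing power of two, to the product
$$\left(1 + z^{e^{[1]}}\right)^{m}\left(1 + z^{e^{[2]}}\right)^{m}\left(1 + z^{e^{[3]}}\right)^{m}.$$
Expanding each factor by the binomial theorem and multiplying out, every monomial carries a strictly positive (product of binomial) coefficient, so there is no cancellation. Hence the support (the set of lattice points with nonzero coefficient) is exactly
$$P_m = \{\, i\, e^{[1]} + j\, e^{[2]} + k\, e^{[3]} \;:\; 0 \le i, j, k \le m,\ i,j,k \in \Z \,\}.$$

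Next, by Definition \ref{def:support}, $\supp(s_m(e^{[1]}, e^{[2]}, e^{[3]}))$ is the closed convex hull of $P_m$, which is automatically compact and convex; this already settles the convexity assertion of the lemma. It remains to identify this convex hull explicitly, and I would show that it equals the zonotope
$$[0,m]\,e^{[1]} \Msum [0,m]\,e^{[2]} \Msum [0,m]\,e^{[3]}.$$
The inclusion $\subseteq$ is immediate, since $P_m$ is contained in this convex set. For the reverse inclusion, I would view the zonotope as the image of the cube $[0,m]^3$ under the linear map $(x,y,w)\mapsto x\, e^{[1]} + y\, e^{[2]} + w\, e^{[3]}$, so that its extreme points are attained at $x,y,w\in\{0,m\}$; since $0$ and $m$ are integers lying in $\{0,\ldots,m\}$, all these extreme points already belong to $P_m$, and therefore the whole zonotope lies in the convex hull of $P_m$.

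Finally, I would factor out the scale $m$ using $[0,m]\,e^{[i]} = m\cdot\bigl([0,1]\,e^{[i]}\bigr)$ together with the homogeneity of the Minkowski sum under scalar multiplication, $\lambda(A \Msum B) = (\lambda A) \Msum (\lambda B)$, to obtain
$$[0,m]\,e^{[1]} \Msum [0,m]\,e^{[2]} \Msum [0,m]\,e^{[3]} = m\cdot\Bigl( [0,1]\,e^{[1]} \Msum [0,1]\,e^{[2]} \Msum [0,1]\,e^{[3]} \Bigr),$$
and the set in parentheses is precisely the $m=1$ instance, namely $\supp(s(e^{[1]}, e^{[2]}, e^{[3]}))$. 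The main point requiring care is the convex-hull computation of the second paragraph: one must check that passing from the discrete set $P_m$ to its convex hull recovers the full zonotope, which rests on the observation that the vertices of the zonotope are themselves points of $P_m$. I note that Lemma \ref{Lemma:IIINira} is \emph{not} the tool needed here, as it concerns $\alpha H \Msum \beta H$ for a single set $H$; the relevant elementary fact is rather the distributivity of scaling over the Minkowski sum of the three distinct segments.
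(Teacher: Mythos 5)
Your argument is correct. Note, however, that the paper does not actually prove this lemma: it is stated as a known fact imported from \cite[Chapter 1]{Box}, so your proposal supplies a proof where the authors only give a citation. Your route --- expand the symbol $\bigl(1+z^{e^{[1]}}\bigr)^{m}\bigl(1+z^{e^{[2]}}\bigr)^{m}\bigl(1+z^{e^{[3]}}\bigr)^{m}$, observe positivity of all coefficients to pin down the mask support as the lattice set $P_m$, identify its closed convex hull with the zonotope generated by $m$ copies of each direction, and factor out the scale $m$ --- is sound; the one step worth phrasing carefully is the extreme-point argument, where the precise statement is that every extreme point of the image of the cube $[0,m]^3$ under the linear map is the image of a vertex of the cube (not conversely), which is exactly the direction you need. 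A slightly slicker alternative, and one that arguably matches the authors' intent given that they bundle this lemma with Lemma \ref{Lemma:IIINira}, is to note that the mask of $s_m$ is the $m$-fold discrete convolution of the mask of $s_1$ (the symbol is the $m$-th power), that supports of convolutions of nonnegative sequences add in the Minkowski sense, and that convex hulls commute with Minkowski sums; then $\supp(s_m)=\Msum_{i=1}^{m}\supp(s_1)=m\cdot\supp(s_1)$ follows directly from Lemma \ref{Lemma:IIINira} with $\alpha=\beta=1$ iterated. Your explicit zonotope computation buys a concrete description of the support (useful for drawing the hexagon in the examples), while the convolution route is shorter and reuses machinery already present in the paper; so your closing remark that Lemma \ref{Lemma:IIINira} is not the relevant tool is true for your proof but not for the most economical one.
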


\begin{Proposition} \label{prop:NirasupportBIv}
Let $r\in \N$. The non-stationary subdivision scheme with the masks in \eqref{masks00} generates a $C^\infty$ bivariate  basic limit function $\phi_{0,r}=\lim_{k\rightarrow\infty}S_{a_k}S_{a_{k-1}}\ldots S_{a_0} \delta$ with support
  $$ \suppf(\phi_{0,r})=\left(1+\frac{1}{2^r-1}\right)\supp(s(e^{[1]},  e^{[2]},  e^{[3]})) \ .$$
\end{Proposition}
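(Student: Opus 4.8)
The plan is to combine the smoothness claim, which follows immediately from the general theory, with a support computation that parallels the univariate Proposition~\ref{prop:Nirasupport}. For the smoothness, I would first verify that each mask $a_m=s_{m+1}(e^{[1]},e^{[2]},e^{[3]})$ is a box-spline mask with non-negative (in fact positive) entries whose symbol factors as a product of directional smoothing factors. Writing $V=\{e^{[1]},e^{[2]}\}$ as a basis of $\R^2$ (note $e^{[1]},e^{[2]}\in\N_0^2$ are linearly independent), the symbol of $s_{m+1}(e^{[1]},e^{[2]},e^{[3]})$ contains $m+1$ copies of each of the factors $\tfrac{1+z^{e^{[1]}}}{2},\tfrac{1+z^{e^{[2]}}}{2},\tfrac{1+z^{e^{[3]}}}{2}$. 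Taking $\mathcal V$ to be the constant sequence equal to $V$ and absorbing the extra directions into the set $D$, I would conclude as in Corollary~\ref{coro:3} that $a_m\in{\mathscr C}^{\cal V}_{[m/r]}$. Since the exponent $[m/r]$ grows monotonically and without bound as $m\to\infty$, Corollary~\ref{coro:1} gives convergence to $C^\infty$ limits, so $\phi_{0,r}$ is $C^\infty$.

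For the support, I would invoke Theorem~\ref{teo:support}, after checking {\bf Assumption~S} holds. By Lemma~\ref{Lemma:IVNira} we have $\supp(a_m)=\supp(s_{m+1}(e^{[1]},e^{[2]},e^{[3]}))=(m+1)\cdot\supp(s(e^{[1]},e^{[2]},e^{[3]}))$, so setting $\lambda_k=[k/r]+1$ verifies condition $(i)$ with $\supp(a_0)=\supp(s(e^{[1]},e^{[2]},e^{[3]}))$; condition $(ii)$ is the monotonicity of $[k/r]+1$; and condition $(iii)$ is the convergence of $\sum_k 2^{-(k+1)}([k/r]+1)$, which is a geometric-type series. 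Since all masks have non-negative entries and each defines a converging stationary box-spline scheme, the second part of Theorem~\ref{teo:support} applies and gives $\suppf(\phi_{0,r})=\cSigma=\supp(s(e^{[1]},e^{[2]},e^{[3]}))\cdot\sum_{k=0}^\infty 2^{-(k+1)}([k/r]+1)$.

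It then remains to evaluate the scalar $\sum_{k=0}^\infty 2^{-(k+1)}([k/r]+1)$. I would group the terms into blocks of length $r$ exactly as in the proof of Proposition~\ref{prop:Nirasupport}: on the block $k=jr,\ldots,(j+1)r-1$ the value of $[k/r]+1$ equals $j+1$, contributing $(j+1)\bigl(2^{-(jr+1)}+\cdots+2^{-(jr+r)}\bigr)=(j+1)2^{-rj}\sum_{\ell=1}^r 2^{-\ell}$. Summing over $j$ and applying Lemma~\ref{Lemma:IINira} to $\sum_{j\ge0}(j+1)2^{-rj}=(1-2^{-r})^{-2}$ together with $\sum_{\ell=1}^r 2^{-\ell}=1-2^{-r}$, the scalar collapses to $\tfrac{2^r}{2^r-1}=1+\tfrac{1}{2^r-1}$, which is precisely the claimed scaling factor.

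The main obstacle is essentially bookkeeping rather than a genuine difficulty: I must be careful that the support scaling in Lemma~\ref{Lemma:IVNira} uses $m=m+1$ (the box-spline with each direction repeated $m+1$ times), so the reparametrized factor is $[k/r]+1$ and not $[k/r]$, matching the off-by-one already present in the univariate proof. One should also confirm that $\supp(s(e^{[1]},e^{[2]},e^{[3]}))$ is genuinely compact and convex so that Lemma~\ref{Lemma:IIINira} may be applied inside the Minkowski sum; this is standard for box-splines (the support is the hexagon $\{t_1 e^{[1]}+t_2 e^{[2]}+t_3 e^{[3]}:t_i\in[0,1]\}$), but it is the one geometric fact that must be cited rather than derived.
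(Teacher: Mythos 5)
Your proposal is correct and follows essentially the same route as the paper: Theorem~\ref{teo:support} combined with Lemmas~\ref{Lemma:IIINira}, \ref{Lemma:IVNira} and \ref{Lemma:IINira} to reduce the Minkowski sum to the scalar factor $1+\frac{1}{2^r-1}$, and the membership $a_m\in{\mathscr C}^{\cal V}_{[m/r]}$ with the constant sequence ${\cal V}$ to get $C^\infty$ smoothness. The only cosmetic differences are that you verify \textbf{Assumption S} explicitly and cite Corollary~\ref{coro:1} where the paper cites Corollary~\ref{coro:3}; both are valid.
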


\begin{proof}  By Theorem \ref{teo:support},
$$ supp(\phi_{0,r})=\Msum _{j=0}^\infty 2^{-(j+1)} \supp(a_j)=\Msum_{m=0}^\infty\Msum_{j=0}^{r-1} 2^{-(mr+j+1)} 
\supp(s_{m+1}(e^{[1]},  e^{[2]},  e^{[3]}))\ .$$ 
Thus, in view of Lemma \ref{Lemma:IIINira} and Lemma \ref{Lemma:IVNira},
$$ supp(\phi_{0,r})=\supp(s(e^{[1]},  e^{[2]},  e^{[3]}))\sum_{m=0}^\infty (m+1) 2^{-rm}\sum_{j=0}^{r-1} 2^{-(j+1)}\ .$$
Since $\sum_{j=0}^{r-1}2^{-(j+1)}=1-2^{-r}$,  and since by Lemma  \ref{Lemma:IINira}, $\sum_{m=0}^\infty (m+1) 2^{-rm}=\left(\frac{1}{1-2^{-r}}\right)^2$,
we finally get
$$ \suppf(\phi_{0,r})=\left(1+\frac{1}{2^r-1}\right)\supp(s(e^{[1]},  e^{[2]},  e^{[3]})).$$
That $\phi_{0,r}$ is $C^\infty$ follows from the observation that, by \eqref{masks00}, $a_m\in {\mathscr C}^{\cal V}_{[m/r]}$, $m\in\N$  with ${\cal V}=\{V, V, V, \ldots\}$, $V=\{(1,0),(0,1)\}$, and from Corollary \ref{coro:3}.
This completes the proof of the proposition.
\end{proof}

\begin{Remark}It is interesting to note the rapid decay of  $\suppf(\phi_{0,r})$  with increasing $r$: for $r=1$ we have $\suppf(\phi_{0,1})=2 \supp(s(e^{[1]},  e^{[2]},  e^{[3]}))$,  for $r=2$
$\suppf(\phi_{0,2})=\frac{4}{3} \supp(s(e^{[1]},  e^{[2]},  e^{[3]}))$, and   for $r=5$,  $\suppf(\phi_{0,5})=\frac{32}{31} \supp(s(e^{[1]},  e^{[2]},  e^{[3]})).$
\end{Remark}

\begin{Example}\label{phi01}
 
\medskip We start with the bivariate analogous of the Up-function where the role of schemes generating polynomial splines is replaced by three-directional Box-splines. 
It corresponds to the choice of $r=1$ that is to the  non-stationary subdivision scheme with  symbols of the form
\begin{equation}\label{masks}
 a_{m}(z_1,z_2)=\frac{\left((1+z_1)(1+z_2)(1+z_1z_2)\right)^{m+1}}{2^{3m+1}},\  \  m\in\mathbb{N}_0\ .
\end{equation}
For this non-stationary scheme, we conclude from Proposition \ref{prop:NirasupportBIv} that $\phi_{0,1}$ is $C^\infty$ and that 
$\suppf(\phi_{0,1})=2\, supp(B_{111})$, with $B_{111}$  the Box-spline corresponding to one repetition of $e^{[i]},\ i=1,2,3.$ Thus, the support of $\phi_{0,1}$ is the hexagonal-shaped domain with vertices $(0,0),\ (2,0),\ (4,2),\ (4,4),\ (2,4),\ (0,2)$, taken counterclockwise.
A graph of $\phi_{0,1}$ obtained by running 4 steps of the corresponding non-stationary subdivision scheme is depicted  in Figure  1
together with $\suppf(\phi_{0,1})$.

\begin{figure}[ht]
 \label{fig:figure1} 
 \center
\includegraphics[scale=0.3]{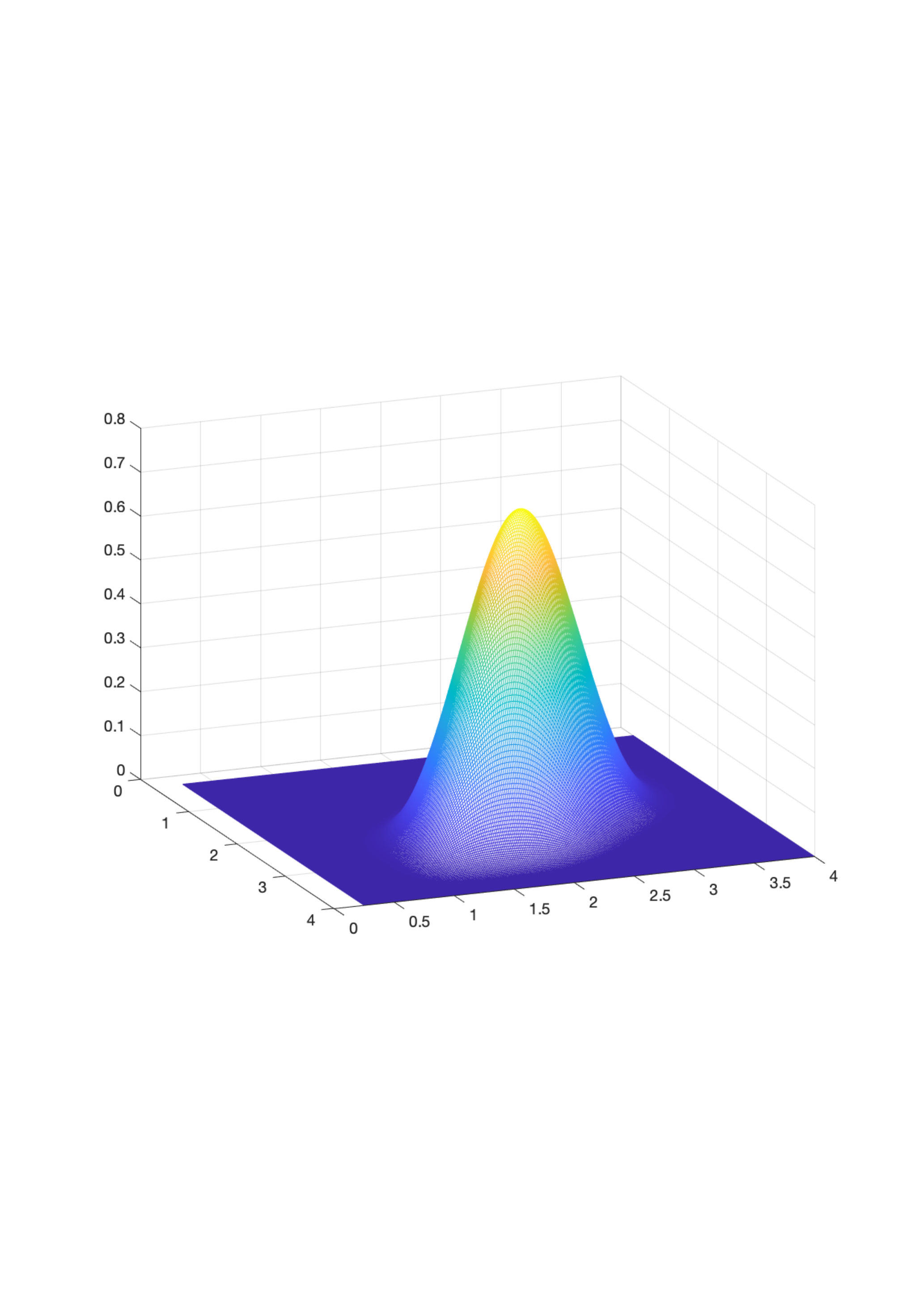}%
\includegraphics[scale=0.3]{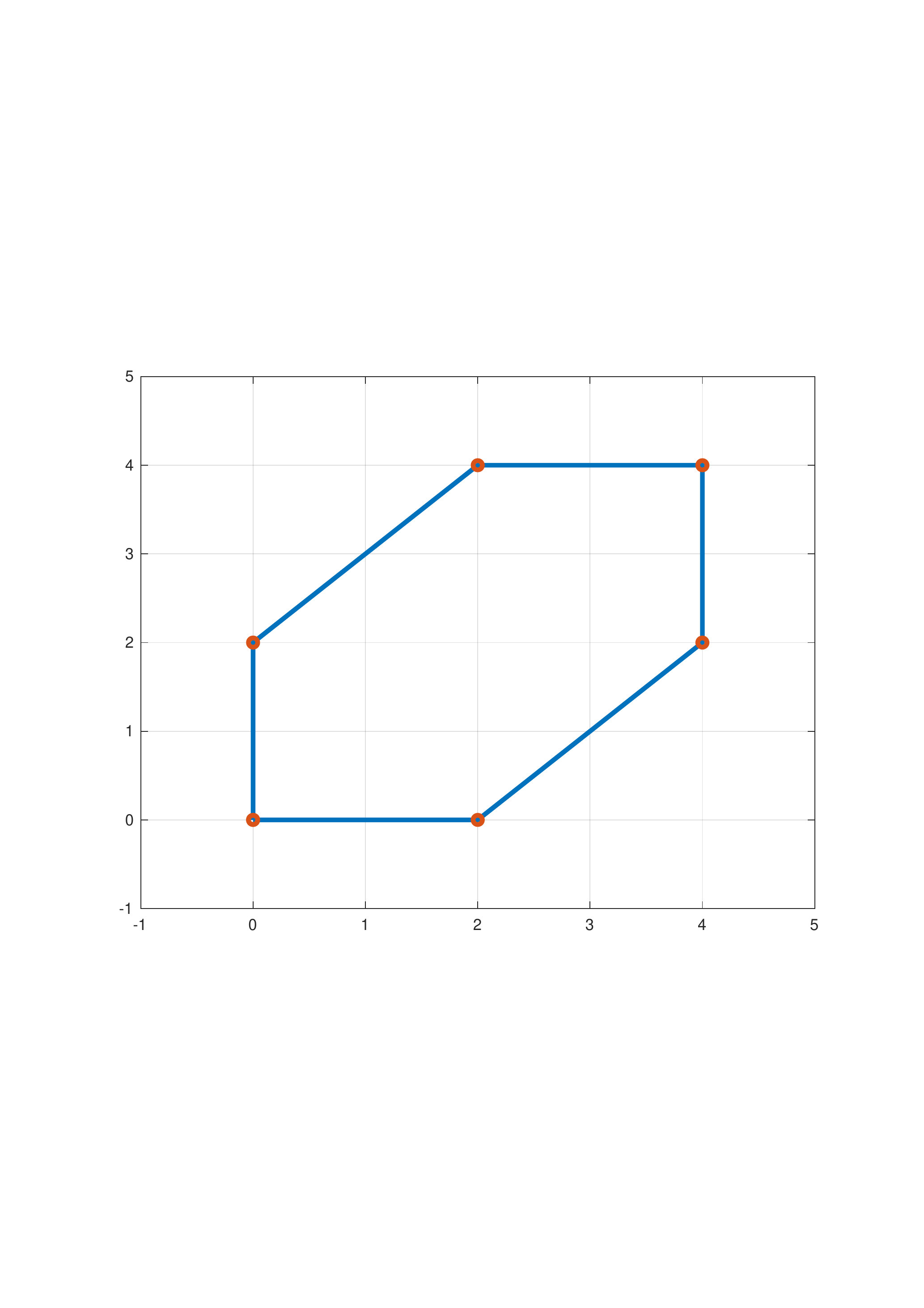}
\vskip -2cm
\caption{Graph of the function $\phi_{0,1}$ (left) and its support (right).}
\end{figure}

\end{Example}

\begin{Example}
We continue with the bivariate analogous of the function discussed in Example \ref{example:NIP}. Here again, the role of B-splines is replaced by three-directional Box-splines and  $r=2$. In other words, we consider  the  non-stationary subdivision scheme with symbols of the form
\begin{equation}\label{masks}
 a_{2m}(z_1,z_2)=a_{2m+1}(z_1,z_2)=\frac{\left((1+z_1)(1+z_2)(1+z_1z_2)\right)^{m+1}}{2^{3m+1}},\  \  m\in\mathbb{N}_0\ .
\end{equation}
For this non-stationary scheme, using  Proposition \ref{prop:NirasupportBIv} we see that $\phi_{0,2}$ is $C^\infty$ and that 
$ supp(\phi_{0,2})=\frac{4}{3}\, supp(B_{111})$, which is 
 the hexagonal-shaped domain with vertices $(0,0),\ (\frac43,0),\ (\frac83,\frac43),\ (\frac83,\frac83),\ (\frac43,\frac83),\ (0,\frac43)$, taken counterclockwise.
We give in Figure  2
the graph of $\phi_{0,2}$ obtained by running $8$ steps of the corresponding non-stationary subdivision scheme. We also give  the graph of the support of $\phi_{0,2}$.

\begin{Remark}
Note that in case $r=2$ four steps of the non -stationary scheme are not sufficient to show the smoothness of the limit function.
\end{Remark}

\begin{figure}[ht]
 \label{fig:figure2} 
 \center
\includegraphics[scale=0.3]{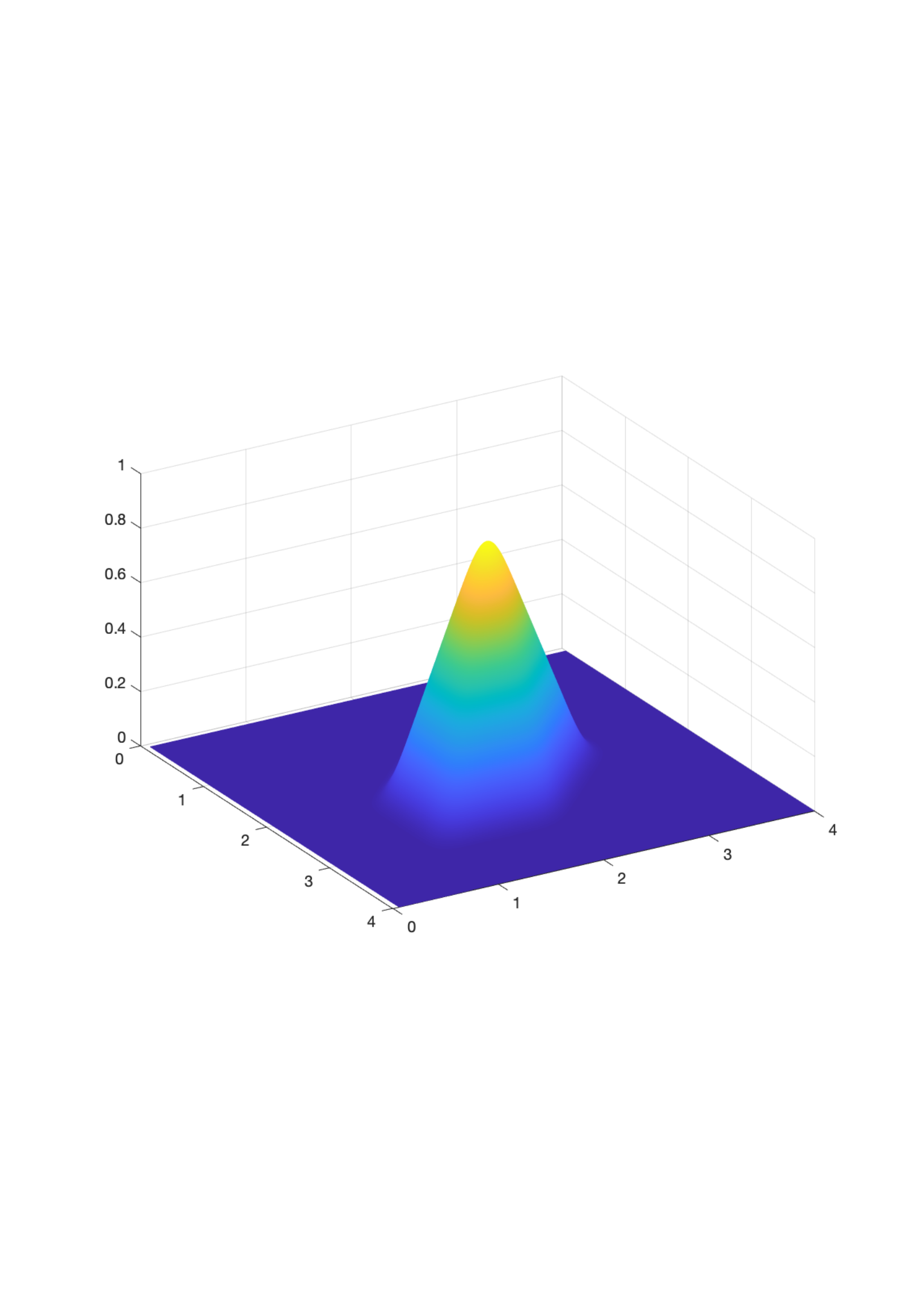}%
\includegraphics[scale=0.3]{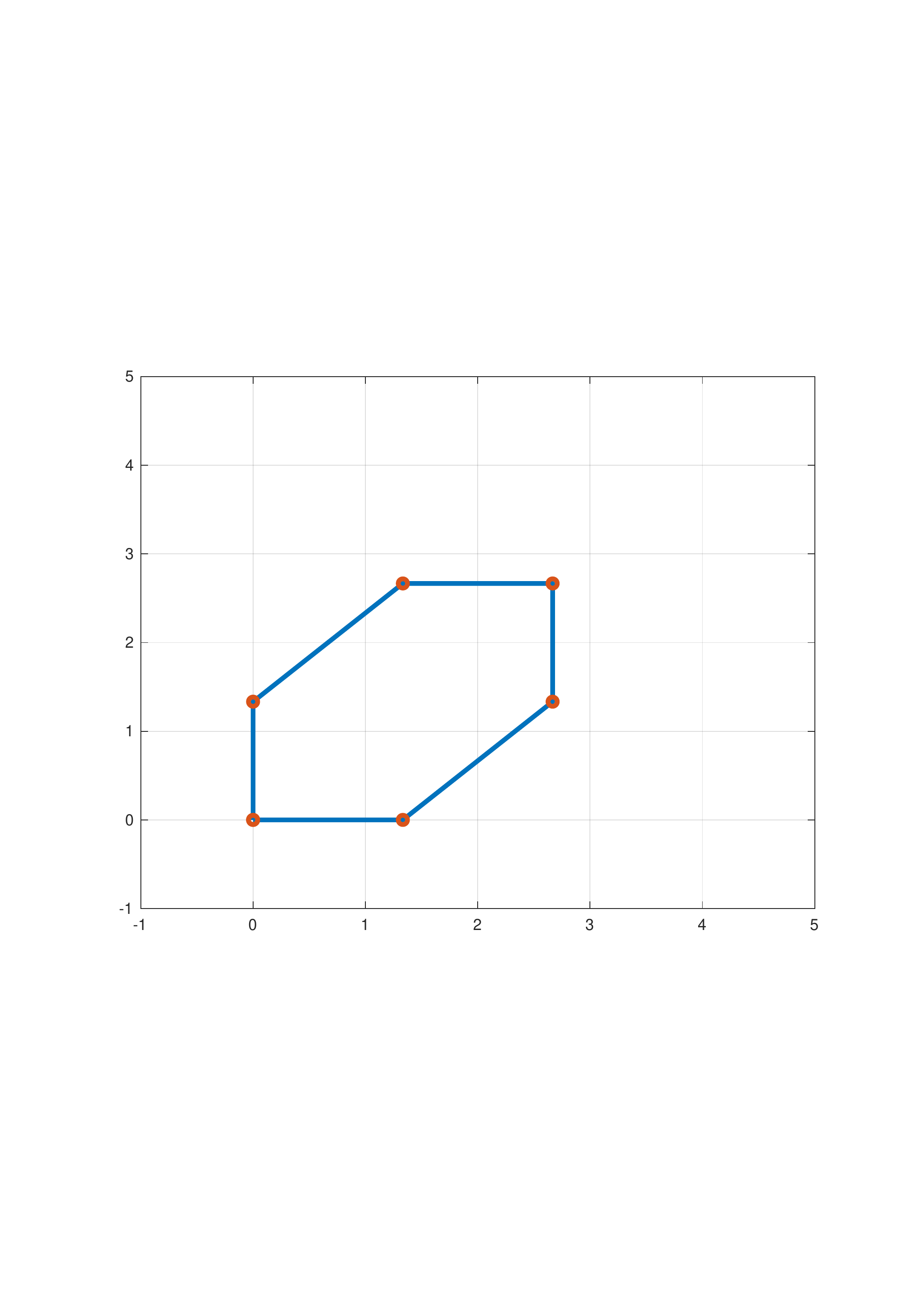}
\vskip -2cm
\caption{Graph of the function $\phi_{0,2}$ (left) and its support (right).}
\end{figure}

\end{Example}

 \bigskip {\bf Acknowledgement}\\
The second author is a member of INdAM-GNCS partially supporting this work, of Rete ITaliana di Approssimazione, and of UMI-T.A.A. group. 

\end{document}